\newtheorem{theorem}{Theorem}[section]
\newtheorem{proposition}[theorem]{Proposition}
\newtheorem{lemma}[theorem]{Lemma}
\newtheorem{remark}[theorem]{Remark} 
\numberwithin{equation}{section} 
\numberwithin{figure}{section}  
\newcommand \la \langle
\newcommand \ra \rangle
\newcommand \Kcal {\mathcal{K}}
\newcommand \Hcal {\mathcal{H}}
\newcommand \underdel {\underline \partial}
\newcommand \overdel {\overline \partial}
\newcommand \trianglerightNEW \triangleright
\newcommand \auth {\textsc}
\newcommand \bei {\begin{itemize}}
\newcommand \eei {\end{itemize}}
\newcommand \be {\begin{equation}}
\newcommand \bel {\be\label}
\newcommand \ee {\end{equation}}
\newcommand \del \partial
\newcommand \RR {\mathbb R}
\newcommand \eps \epsilon
\let\oldmarginpar\marginpar
\renewcommand\marginpar[1]{\-\oldmarginpar[\raggedleft\footnotesize #1]%
{\raggedright\footnotesize #1}}
\begin{document}

\title{\bf \Large 
Stability of a wave and Klein-Gordon system\\ with mixed coupling} 

\author{Shijie Dong
\footnote{Fudan University, School of Mathematical Sciences, Shanghai, China. 
Email: dongs@ljll.math.upmc.fr, shijiedong1991@hotmail.com.
}}


\date{\today}
\maketitle

\begin{abstract} 
{
We are interested in establishing stability results for a system of semilinear wave and Klein-Gordon equations with mixed coupling nonlinearities, that is, we consider all of the possible quadratic nonlinear terms of the type of wave and Klein-Gordon interactions. The main difficulties are due to the absence of derivatives on the wave component in the nonlinearities. By doing a transformation on the wave equation, we reveal a hidden null structure. Next by using the scaling vector field on the wave component only, which was generally avoided, we are able to get very good $L^2$--type estimates on the wave component. Then we distinguish high order and low order energies of both wave and Klein-Gordon components, which allows us to close the bootstrap argument.
}
\end{abstract}
{\sl MSC code.} 35L05, 35L52, 35L71.
\newline
{\sl Keywords.} Wave and Klein-Gordon system; global existence; sharp pointwise decay; null condition; hyperboloidal foliation method.

\tableofcontents

\section{Introduction} 
\paragraph{Model problem.}

We are interested in studying the following coupled wave and Klein-Gordon system

\bel{eq:model}
\aligned
-\Box u 
&= Q_{u0}(u; v, \del v) + Q_{u1}(\del u; v, \del v),
\\
-\Box v + v 
&= Q_{v0}(u; v, \del v) + Q_{v1}(\del u; v, \del v), 
\\
Q_{u0}(u; v, \del v) = M_u u v + & M_u^{\alpha} u \del_\alpha v,
\qquad
Q_{u1}(\del u; v, \del v) = N_u^\alpha \del_\alpha u v + N_u^{\alpha \beta} \del_\alpha u \del_\beta v,
\\
Q_{v0}(u; v, \del v) = M_v u v + & M_v^{\alpha} u \del_\alpha v,
\qquad
Q_{v1}(\del u; v, \del v) = N_v^\alpha \del_\alpha u v + N_v^{\alpha \beta} \del_\alpha u \del_\beta v,
\endaligned
\ee
whose initial data are prescribed on the time slice $t=t_0$ (we always take $t_0 = 2$)
\bel{eq:model-ID}
\aligned
\big( u, \del_t u \big) (t_0, \cdot)
&=
\big( u_0, u_1 \big),
\\
\big( v, \del_t v \big) (t_0, \cdot)
&=
\big( v_0, v_1 \big).
\endaligned
\ee
In the above, $M_u, M_v, M_u^{\alpha}, M_v^{\alpha}, N_u^\alpha, N_v^\alpha, N_u^{\alpha \beta}, N_v^{\alpha \beta}$ are constants, and no null conditions are assumed. Throughout, Roman letters take values within $\{0, 1, 2, 3\}$ and Latin letters take values in $\{1, 2, 3\}$, and Einstein summation convention is adopted unless otherwise mentioned. The wave operator is denoted by $\Box = \eta^{\alpha \beta} \del_\alpha \del_\beta$, with the metric $\eta = \text{diag} (-1, 1, 1, 1)$. We note that the system \eqref{eq:model} takes into account all of the possible semilinear nonlinearities of wave and Klein-Gordon interactions.

Motivated by the work \cite{Georgiev, Katayama12a, Dong1, DLW}, we want to establish the small data global existence result for the model problem \eqref{eq:model}, with physical models (like Dirac-Proca model etc.) behind.
Recall the remark in \cite{Dong1} that the main difficulty is that the nonlinearities are critical. That is, even if the solution $(u, v)$ behaves very nicely, we still have the following critical non-integrable quantities
\be 
\aligned
\big\| Q_{u0}(u; v, \del v) + Q_{u1}(\del u; v, \del v) \big\|_{L^2(\RR^3)}
&\sim t^{-1},
\\
\big\| Q_{v0}(u; v, \del v) + Q_{v1}(\del u; v, \del v) \big\|_{L^2(\RR^3)}
&\sim t^{-1},
\endaligned
\ee
which are due to the fact that in some terms in the nonlinearities of \eqref{eq:model} there are no derivatives on the wave component $u$. Since the standard energy for the wave equation cannot directly control norms like $\| u \|_{L^2(\RR^3)}$, we regard the presence of $u$ in the nonlinearities as a bad indication.

This problem was proposed in \cite{PLF-YM-book}, where the authors were not sure whether the nonlinearties $u v, u \del v$ lead to global solutions or finite time blowup, and then was partially solved in \cite{Dong1} with positive result. In this article, we will also be able to deal with the nonlinearities of the type $u \del v$ in the Klein-Gordon equation, which was not treated in \cite{Dong1}.

\paragraph{Brief history.} We first very briefly revisit some previous work in the study of pure wave or pure Klein-Gordon equations (mainly in $\RR^{3+1}$), and then recall the history on the study of coupled wave and Klein-Gordon systems. 
In the work by John \cite{John2, John}, it was proved that general quadratic nonlinearities in the wave equations cannot ensure global-in-time solutions. Later on, Klainerman \cite{Klainerman86} and Christodoulou \cite{Christodoulou} independently proved that wave equations admit global-in-time solutions if the nonlinearities satisfy null condition. One refers to \cite{L-R-cmp, L-R-annals} and \cite{P-S-cpam} for the study on the generalisation of the null condition.
As for the Klein-Gordon equations, we only recall here the breakthrough by Klainerman \cite{Klainerman85} and Shatah \cite{Shatah}, where the authors independently showed that general quadratic nonlinearities can guarantee the existence of small global solutions to the Klein-Gordon equations. Thus it is natural to ask what kind of quadratic nonlinearities lead to global solutions to the coupled wave and Klein-Gordon equations?

The study of the coupled wave and Klein-Gordon systems has attracted people's attention since decades ago. It is very important to study such coupled systems, not only because they are of great interest in the view of pure PDE, but also for the reason that the coupled wave and Klein-Gordon systems can be derived from many important physical models, like the Dirac-Proca model and Klein-Gordon-Zakharov model, the Maxwell-Klein-Gordon model, the Einstein-Klein-Gordon model and many others.
As far as we know, the study on this subjects was initiated by Bachelot \cite{Bachelot} on the Dirac-Klein-Gordon system. Later on Georgiev \cite{Georgiev} proved the global existence result for the nonlinearities satisfying the strong null condition (i.e. $\del_\alpha u \del_\beta v - \del_\beta u \del_\alpha v$, $\alpha, \beta \in \{0, 1, 2, 3\}$), which are compatible with the vector fields excluding the scaling vector field.
Then Tsutsumi and his collaborators proved Klein-Gordon-Zakharov system in \cite{OTT}, Dirac-Proca system in \cite{Tsutsumi}, and the Maxwell-Higgs system in \cite{Tsutsumi2}. In the similar time, Psarelli studied the Maxwell-Klein-Gordon equations in \cite{PsarelliA, PsarelliT}, and as far as we know this is the first time people considered bad (with on derivaties on the wave component $u$) nonlinearities of the type $u \del v$. Long time after that, Katayama \cite{Katayama12a, Katayama18} proved a large class of coupled wave and Klein-Gordon equations. In \cite{Katayama12a}, for example, Katayama considered the bad nonlinearities of the type $u v, u \del v$, but with the assumption that the nonlinearities in the wave equation of $u$ have divergence form, and this means that the bad nonlinearities are essentially of the type $\del u v, \del u \del v$, which are not very bad. Later on, LeFloch and Ma also tackled a large class of the coupled wave and Klein-Gordon equations with compactly supported initial data in \cite{PLF-YM-book} using the hyperboloidal foliation method, which is essentially the Klainerman's vector field method on hyperboloids. Soon after that, LeFloch and Ma proved the nonlinear stability result of the Einstein-Klein-Gordon system in \cite{PLF-YM-cmp}, where they considered the bad nonlinearities of the type $u \del \del v$. Later on, Wang \cite{Wang} and Ionescu-Pausader \cite{Ionescu-P} also proved global existence of the Einstein-Klein-Gordon system. Recently, part of the nonlinearities of the type $u v, u \del v$ was considered in \cite{Dong1} for a coupled wave and Klein-Gorodn system. Besides, we also mention the recent work \cite{PLF-YM-arXiv1, PLF-YM-arXiv2, Klainerman-QW-SY, Fang, Ionescu-P2} on coupled wave and Klein-Gordon equations, where no compactness assumption on the initial data are needed.

\paragraph{Main result.}


Our goal is to prove the existence of global-in-time solution to the model problem \eqref{eq:model}, and to further investigate the pointwise decay result of the solution. 

\begin{theorem}[Nonlinear stability of the system of wave and Klein-Gordon equations with compact support]
\label{thm:wKG}
Consider the system \eqref{eq:model} with initial data supported in $\{ (t_0=2, x) : |x| \leq 1 \}$
and let $N \geq 7$ be an integer.  
Then there exists $\eps_0 > 0$, and the initial value problem \eqref{eq:model}--\eqref{eq:model-ID} admits a global-in-time solution $(u, v)$ for
all compactly supported initial data $(u_0, u_1, v_0, v_1)$
satisfying the smallness condition 
\bel{main-thm-initial-data}
\| u_0, v_0 \|_{H^{N+1}(\RR^3)} + \| u_1, v_1 \|_{H^N(\RR^3)} 
\leq \eps, 
\qquad
\eps \in (0, \eps_0).
\ee
Furthermore, we have the following sharp pointwise decay results
\bel{eq:thm}
| u(t, x) | 
\lesssim t^{-1} (t-|x|)^{-1/2},
\quad
| v(t, x) |
\lesssim t^{-3/2}.
\ee 
\end{theorem}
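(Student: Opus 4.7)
The plan is to run a bootstrap argument on the hyperboloids $\Hcal_s = \{ t^2 - |x|^2 = s^2,\ t \geq s\}$ in the spirit of the hyperboloidal foliation method of LeFloch--Ma, controlling weighted energies of $Z^I u$ and $Z^I v$ for suitable strings $Z^I$ of admissible vector fields. Finite speed of propagation applied to the compactly supported data confines the solution to the interior cone $|x|\leq t-1$, so the whole analysis reduces to estimates on each $\Hcal_s$.

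First I would remove the apparently critical nonlinearities with no derivative on $u$ by exploiting the Klein-Gordon structure of $v$. Since $v = \Box v + \mathrm{NL}_v$, the bad term $M_u uv$ equals $M_u u\Box v$ up to cubic contributions, and the algebraic identity $u\Box v = \Box(uv) - (\Box u)\,v - 2\del^\alpha u\,\del_\alpha v$ converts this into a total derivative $\Box(uv)$, a term $(\Box u)\,v$ that becomes cubic once $\Box u$ is replaced by the wave equation, and a genuine null form $\del^\alpha u\,\del_\alpha v$. The total derivative is absorbed by introducing a modified wave unknown $\tilde u = u + M_u uv$, and the remaining bad term $M_u^\alpha u\,\del_\alpha v$ is handled by an analogous reduction. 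After this step the effective wave nonlinearity either satisfies a null condition or carries a derivative on $u$, so the non-integrable $L^2$ obstruction quoted in the introduction disappears.

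Next I would organise the vector-field machinery asymmetrically between the two components. For the Klein-Gordon component $v$ I use only the operators $\{\del_\alpha, L_a = x_a\del_t + t\del_a, \Omega_{ab}\}$ together with the hyperboloidal Klein-Gordon energy, which gives the sharp decay $|v|\lesssim t^{-3/2}$ through a Klainerman--Sobolev inequality on $\Hcal_s$. For the wave component $u$ I allow in addition the scaling vector field $L_0 = t\del_t + x^a\del_a$, which commutes with $\Box$ and therefore can be applied freely to $u$ without producing the forbidden zeroth order commutator $2v$ that normally blocks the use of $L_0$ in Klein-Gordon problems. This yields an extra weighted $L^2$ bound on $u$, strong enough to cover the $t^{-1}$ deficit and, via Klainerman--Sobolev on $\Hcal_s$, to recover the sharp decay $|u|\lesssim t^{-1}(t-|x|)^{-1/2}$.

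Finally I would close the bootstrap using a two-tier hierarchy: a top-order energy of order $N$ allowed to grow like $s^{C\eps}$, and a low-order energy of order $\lesssim N/2+1$ that stays of size $C\eps$ and feeds Sobolev embedding on $\Hcal_s$ to produce pointwise decay; nonlinear interactions are arranged so that in every product the low-order factor supplies integrable decay against the top-order factor. The main obstacle, I expect, is handling the terms where $L_0$ falls on $v$ after Leibniz differentiation of the nonlinearity of $\tilde u$: since $L_0 v$ is not controlled by the Klein-Gordon energy, those contributions must either be absorbed into further modifications of the wave unknown or re-expressed through the Klein-Gordon equation so that only admissible fields ever act on $v$, and tracking the weights of $(t-|x|)$ on $\Hcal_s$ throughout this bookkeeping is the delicate part of the argument.
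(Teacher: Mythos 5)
Your overall architecture matches the paper's: hyperboloidal foliation, a normal-form transformation of the wave unknown that converts the derivative-free nonlinearities into genuine null forms plus cubic remainders, asymmetric treatment of the scaling field $L_0$ (used for $u$ but not for $v$), and a tiered bootstrap hierarchy closed via Klainerman--Sobolev on $\Hcal_s$. The transformation step in particular is essentially identical to the paper's: the paper sets $U = u + Q_{u0} + Q_{u1}$ and derives $-\Box U = N(u,v) + H(u,v)$ with $N$ a sum of null forms $\del_\gamma(\cdots u)\del^\gamma(\cdots v)$ and $H$ cubic, exactly the mechanism you describe.

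However, the way you propose to exploit $L_0$ has a genuine gap, and it is precisely the gap you yourself flag at the end. You want to \emph{commute} $L_0$ through the equation for $\tilde u$ (so "$L_0$ falls on $v$ after Leibniz differentiation"), and then patch the resulting $L_0 v$ terms by further normal forms or by the Klein--Gordon equation. Two problems. First, $L_0 = t\del_t + x^a\del_a$ does not commute with $\Box$; one has $[\Box, L_0] = 2\Box$, so it is admissible only up to a lower-order correction, not "freely." Second, and more importantly, the patch you gesture at is left completely open, and it is exactly the point where a naive attempt breaks down: there is no clean way to keep $L_0$ off $v$ once $L_0$ is in the commuting family. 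The paper avoids the issue by never putting $L_0$ in the list of commuting vector fields at all. It instead (i) expands the null form classically via $\del^\alpha u\,\del_\alpha v = -t^{-1}\big(\del_t v\, L_0 u - L_a v\, \del^a u\big)$, so that $L_0$ acts on $u$ only and $v$ is hit by $\del$ and $L_a$ alone; and (ii) controls $\|(s/t)L_0\del^I L^J u\|_{L^2_f(\Hcal_s)}$ not by commutation but by the \emph{conformal-type energy} $E_{\mathrm{con}}$ on hyperboloids (Lemma 2.4 / Lemma 2.5, after Ma--Huang), which directly bounds $\|(s/t)u\|_{L^2}$, $\|(s/t)L_0 u\|_{L^2}$, and $\|(s/t)L_a u\|_{L^2}$. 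This conformal energy is also what gives the extra $L^2$ bound on the undifferentiated $u$ that feeds Klainerman--Sobolev and produces the sharp pointwise decay $|u|\lesssim t^{-1}(t-r)^{-1/2}$. Without this ingredient --- or some replacement for it --- your bootstrap does not close, because the standard hyperboloidal energy gives no bound on $\|u\|_{L^2}$ or $\|L_0 u\|_{L^2}$.

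A minor structural point: the paper's hierarchy for the conformal energy of $u$ has three tiers (growth $s^{1/2+\delta}$ up to order $N-1$, $s^\delta$ at order $N-3$, and bounded at order $\leq N-4$), not the two-tier "$s^{C\eps}$ at top, bounded at low order" you sketch; the intermediate tier is needed because the conformal energy for near-top orders genuinely grows like $s^{1/2+\delta}$, and the argument must thread through this.
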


Note that the nonlinearities $Q_{u1}(\del u; v, \del v), Q_{v1}(\del u; v, \del v)$ are regarded as good ones, and the main difficulties are caused by the nonlinearities $Q_{u0}(u; v, \del v), Q_{v0}(u; v, \del v)$ in which there are no derivatives on the $u$ part.   
Recall that in \cite{Dong1}, the nonlinearities $Q_{u0}(u; v, \del v)$ can be treated by doing a transformation and revealing the hidden null structure of the nonlinearities $Q_{u0}(u; v, \del v)$ which is inspired by the work of Tsutsumi in \cite{Tsutsumi}. As for the $uv$ term appearing in $Q_{v0}(u; v, \del v)$ of the Klein-Gordon equation, the novel idea in \cite{Dong1} is that we move this term to the left hand side and regard it as a small pertubation of the mass of the Klein-Gordon equation, and then apply the techniques in \cite{PLF-YM-cmp} to treat the Klein-Gordon equation with mass $m = \sqrt{1-u}$. But the $u \del v$ term in the $Q_{v0}(u; v, \del v)$  cannot be treated in \cite{Dong1}.

Due to the point of view that the null form $\del^\alpha u \del_\alpha v$ is not consistent with the Klein-Gordon equations, the existing ways of handling it do not seem to be precise enough. On one hand, the treatment on $\del^\alpha u \del_\alpha v$ was missed and the notion of strong null form $\del_\alpha u \del_\beta v - \del_\beta u \del_\alpha v$ was come up with in \cite{Georgiev}, while on the other hand, the way using the semi-hyperboloidal frame to estimate $\del^\alpha u \del_\alpha v$, introduced in \cite{PLF-YM-book}, can be inteprated as treating the product of two Klein-Gordon components. Neither of the above ways could provide a good factor of $t^{-1}$ as in the pure wave equation cases. Fortunately, we find that we can still gain some extra decay by treating the null forms $\del^\alpha u \del_\alpha v$ in the classical manner, which is the key to treating the $u \del v$ term in the Klein-Gordon equation compared to \cite{Dong1}, and which relies on the following observations: (1) in the estimate of 
$$
\del^\alpha u \del_\alpha v
=
{1\over t} \big( \del_t v L_0 u - L_a v \del^a u \big),
$$
we are able to move the scaling vector field $L_0$ to the wave component $u$ only, and we take $L^2$--type estimate on $L_0 u$ when estimating the low order energies of the null form $\del^\alpha u \del_\alpha v$;
(2) the conformal energy estimates allow us to bound the $L^2$--type norm of the term $L_0 u$;
(3) we are able to obtain pointwise decay results of $u$ and $L_0 u$ by avoiding using the scaling vector field $L_0$ (twice).

Of independent interest, we find a surprising phenomena, but let us recall some existing results before we provide the statement. 
In \cite{Alinhac, PLF-YM-cmp}, the authors proved independently the following pointwise decay results for wave equations, which can be roughly described as: let $w$ solve the wave equation
\be 
\aligned
- &\Box w = f,
\\
w(t_0, x) &= \del_t w(t_0, x) =0,
\endaligned
\ee
with $f$ spatially compactly supported and satisfying
\be 
| f | \leq C_f t^{- 2 - \nu} (t - r)^{-1 + \mu},
\ee
for $0<\mu \leq 1/2$ and $0<\nu \leq 1/2$ with $C_f$ some constant, and then one has
\be 
| w(t, x) | \lesssim {C_f \over \nu \mu} (t - r)^{\mu - \nu} t^{-1}.
\ee
This result shows that if the homogeneity (total power) of the decay rate of the source term $f$ is $-3 - \nu + \mu$, then the homogeneity of the decay rate of the solution $w$ is $-1 - \nu + \mu$.
Now we are ready to state the surprising phenomena: recall the nonlinearities $Q_{u0}(u; v, \del v) + Q_{u1}(\del u; v, \del v)$ in the wave equation of $u$, and the best pointwise decay we can expect for it is 
$$
\big| Q_{u0}(u; v, \del v) + Q_{u1}(\del u; v, \del v) \big|
\sim t^{-5/2} (t - |x|)^{-1/2},
$$
whose homogeneity is $-3$, but we can prove in Theorem \ref{thm:wKG} the following strong pointwise decay of $u$ 
$$
| u(t, x) | 
\lesssim t^{-1} (t-|x|)^{-1/2},
$$
whose homogeneity is $-3/2$ instead of $-1 = -3 + 2$ which is what we predict.


\begin{remark}
In Theorem \ref{thm:wKG} we assumed that the initial data are compactly supported, which is due to the use the hyperboloidal foliation of the spacetime. But we believe this restriction can be removed either by the ways used in \cite{PLF-YM-arXiv1, PLF-YM-arXiv2, Klainerman-QW-SY}, or by using the Sobolev inequalities proposed in \cite{Klainerman2} and \cite{Georgiev2} to get the pointwise decay for the wave and Klein-Gordon components respectively.

\end{remark}



\subsection*{Outline}

The rest of this article is organised as follows.

In Section \ref{sec:BHFM}, we revisit some preliminaries on the wave equations, the hyperboloidal foliation method, and some important inequalities. Then we prove Theorem \ref{thm:wKG} relying on the bootstrap method in Section \ref{sec:BA}.


\section{Preliminaries}
\label{sec:BHFM}
 
\subsection{Hyperboloidal foliation of Minkowski spacetime}

We first recall some basic notations in \cite{PLF-YM-book} about the hyperboloidal foliation method,
so that we can introduce the energy functional for wave or Klein-Gordon components on hyperboloids. We work in the $(3+1)$-- dimensional Minkowski spacetime, whose signature is taken to be $(-, +, +, +)$. We write a point $(t, x) = (x^0, x^1, x^2, x^3)$ in Cartesion coordinates, and its spatial radius is denoted by $r := | x | = \sqrt{(x^1)^2 + (x^2)^2 + (x^3)^2}$. The partial derivatives are denoted by 
$$
\del_\alpha = \del_{x^\alpha},
\qquad
\alpha=0, 1, 2, 3,
$$
and might be abbreviated to $\del$, while the Lorentz boosts are represented by
$$
L_a := x^a \del_t + t \del_a, \qquad a= 1, 2, 3,
$$
and might be abbreviated to $L$.
Throughout, all functions are supported in the interior of the future light cone $\Kcal:= \{(t, x): r< t-1 \}$ with vertex $(1, 0, 0, 0)$. The hyperboloidal hypersurfaces are denoted by $\Hcal_s:= \{(t, x): t^2 - r^2 = s^2 \}$ with hyperbolic time $s\geq 2$. We note that for any point $(t, x)$ within the cone $\Kcal$, the following relation holds
$$
r < t,
\qquad
s < t < s^2.
$$


The semi-hyperboloidal frame we will use is defined by
\bel{eq:semi-hyper}
\underdel_0:= \del_t, \qquad \underdel_a:= {L_a \over t} = {x^a\over t}\del_t+ \del_a.
\ee
The vectors $\underdel_a$ are tangent to the hyperboloids. Besides we also note that the vector field $\underdel_\perp:= \del_t+ (x^a / t)\del_a$ is orthogonal to the hyperboloids.
The semi-hyperboloidal frame and the natural Cartesian frame can be transformed to each other by the following relations
\be 
\aligned
\del_t = \underdel_0,
\qquad
\del_a = - {x^a \over t} \del_t + \underdel_a.
\endaligned
\ee

On the other hand, the hyperboloidal frame is defined by
\bel{eq:hyper-frame}
\overdel_0:= \del_s = {s\over t} \del_t, \qquad \overdel_a:= \underdel_a = {x^a\over t}\del_t+ \del_a,
\ee
and its relation with the Cartesian frame reads
\be 
\aligned
\del_t = {t\over s} \del_s,
\qquad
\del_a = - {x^a \over s} \del_s + \overdel_a.
\endaligned
\ee


\subsection{Energy estimates on hyperboloids}

Following \cite{PLF-YM-cmp} and for a function $\phi$ defined on a hyperboloid $\Hcal_s$, we define its energy $E_m$ by
\bel{eq:2energy} 
\aligned
E_m(s, \phi)
&:=
 \int_{\Hcal_s} \Big( \big(\del_t \phi \big)^2+ \sum_a \big(\del_a \phi \big)^2+ 2 (x^a/t) \del_t \phi \del_a \phi + m^2 \phi ^2 \Big) \, dx
\\
               &= \int_{\Hcal_s} \Big( \big( (s/t)\del_t \phi \big)^2+ \sum_a \big(\underdel_a \phi \big)^2+ m^2 \phi^2 \Big) \, dx
                \\
               &= \int_{\Hcal_s} \Big( \big( \underdel_\perp \phi \big)^2+ \sum_a \big( (s/t)\del_a \phi \big)^2+ \sum_{a<b} \big( t^{-1}\Omega_{ab} \phi \big)^2+ m^2 \phi^2 \Big) \, dx,
 \endaligned
 \ee
and in the above $\Omega_{ab}:= x^a\del_b- x^b\del_a$ represent the rotational vector fields, and $\underdel_{\perp}= L_0/t = \del_t+ (x^a / t) \del_a$ is the orthogonal vector field. For simplicity we will denote $E(s, \phi):= E_0(s, \phi)$.
The integral above in $L^1(\Hcal_s)$ is defined by
\bel{flat-int}
\|\phi \|_{L^1_f(\Hcal_s)}
:=\int_{\Hcal_s}|\phi | \, dx 
=\int_{\RR^3} \big|\phi(\sqrt{s^2+r^2}, x) \big| \, dx.
\ee
We remind one that it holds
$$
\big\| (s/t) \del \phi \big\|_{L^2_f(\Hcal_s)} + \sum_\alpha \big\| \overdel_\alpha \phi \big\|_{L^2_f(\Hcal_s)}
+
\big\| t^{-1} L_0 \phi \big\|_{L^2_f(\Hcal_s)} 
\lesssim
E_m(s, \phi)^{1/2}.
$$

Next, we demonstrate the energy estimates to the hyperboloidal setting.

\begin{proposition}[Energy estimates for wave-Klein-Gordon equations]
For $m \geq 0$ and for $s \geq s_0$ (with $s_0 = 2$), it holds that
\bel{eq:w-EE} 
E_m(s, u)^{1/2}
\leq 
E_m(s_0, u)^{1/2}
+ \int_2^s \big\| -\Box u + m^2 u \big\|_{L^2_f(\Hcal_{s'})} \, ds'
\ee
for all sufficiently regular functions $u$, which are defined and supported in $\Kcal_{[s_0, s]}$.
\end{proposition}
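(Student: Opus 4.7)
The plan is to run the classical multiplier argument with $\partial_t u$ and convert the resulting spacetime integral into an integral over the hyperboloidal foliation. First, I compute the divergence identity: multiplying $-\Box u + m^2 u = f$ by $\partial_t u$ and rearranging yields
\be
f \, \del_t u = \del_t \Bigl(\tfrac12 (\del_t u)^2 + \tfrac12 \sum_a (\del_a u)^2 + \tfrac12 m^2 u^2 \Bigr) - \sum_a \del_a \bigl( \del_a u \, \del_t u \bigr).
\ee
Then I integrate this identity over the truncated cone $\Kcal_{[s_0,s]} = \{(t,x) \in \Kcal : s_0 \leq \sqrt{t^2 - r^2} \leq s\}$. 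Because $u$ is supported in the interior of $\Kcal$, all lateral boundary contributions vanish, and the divergence theorem leaves only the flux integrals through the two hyperboloidal caps $\Hcal_{s_0}$ and $\Hcal_s$.

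Computing the flux through $\Hcal_{s'}$ using the parametrization $(s', x) \mapsto (\sqrt{(s')^2 + r^2}, x)$ (whose Jacobian with respect to the Cartesian volume is $s'/t$), I get precisely the quantity $E_m(s', u)$ appearing in \eqref{eq:2energy}. Thus
\be
E_m(s, u) - E_m(s_0, u) = 2 \int_{s_0}^s \int_{\Hcal_{s'}} f \, \del_t u \cdot \frac{s'}{t} \, dx \, ds'.
\ee
Differentiating in $s$ and applying Cauchy--Schwarz on each slice,
\be
\frac{d}{ds} E_m(s, u) = 2 \int_{\Hcal_s} f \cdot (s/t) \del_t u \, dx \leq 2 \, \|f\|_{L^2_f(\Hcal_s)} \, \bigl\| (s/t) \del_t u \bigr\|_{L^2_f(\Hcal_s)} \leq 2 \, \|f\|_{L^2_f(\Hcal_s)} \, E_m(s,u)^{1/2},
\ee
where the last inequality uses the expression of $E_m$ in the semi-hyperboloidal frame.

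Finally, dividing by $2 E_m(s,u)^{1/2}$ on both sides gives $\frac{d}{ds} E_m(s,u)^{1/2} \leq \|f\|_{L^2_f(\Hcal_s)}$, and integrating from $s_0$ to $s$ yields the estimate \eqref{eq:w-EE}. The only delicate points are bookkeeping: verifying that the flux computed in Cartesian coordinates exactly matches the definition of $E_m$ (which is just algebra), and confirming that the compact support of $u$ in $\Kcal$ suppresses all boundary contributions on the null cone $\{r = t - 1\}$. The final division by $E_m^{1/2}$ is harmless at points where $E_m$ vanishes because $d/ds \, E_m$ vanishes there as well; alternatively one can replace $E_m$ with $E_m + \delta$ and let $\delta \to 0^+$.
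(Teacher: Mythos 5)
Your proof is correct and is essentially the standard argument that the paper delegates to the reference \cite{PLF-YM-cmp}: multiply by $\del_t u$, write the resulting identity in divergence form, integrate over the hyperboloidal slab $\Kcal_{[s_0,s]}$ (where compact support kills the lateral boundary), identify the flux through each $\Hcal_{s'}$ as $\tfrac12 E_m(s',u)$, change variables in the bulk integral via $dt = (s'/t)\,ds'$, and close with Cauchy--Schwarz plus a Gr\"onwall-type step (or the $\delta$-regularization you mention) to pass from $E_m$ to $E_m^{1/2}$. The only slip is purely presentational: the flux itself equals $\tfrac12 E_m$, not $E_m$, and the Jacobian factor $s'/t$ belongs to the conversion of the four-dimensional bulk integral rather than to the flux computation; both are already absorbed correctly in your factor of $2$, so the final identity and estimate are right.
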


One refers to \cite{PLF-YM-cmp} for the proof.


\subsection{Sobolev-type and Hardy-type inequality}

We now state a Sobolev-type inequality adapted to the hyperboloidal setting, which will be used to obtain sup-norm estimates for both wave and Klein-Gordon components. The Sobolev-type inequalities on hyperboloids have been proved by Klainerman \cite{Klainerman85}, H{\"o}rmander \cite{Hormander}, Georgiev \cite{Georgiev3}, and LeFloch-Ma \cite{PLF-YM-book, PLF-YM-cmp}. We will use the version of LeFloch-Ma, in which we only need to use the vector fields of Lorentz boosts (recall that other versions of the Sobolev-type inequality also require the use of the translation vector fields and the rotation vector fields). For its proof, one refers to \cite{PLF-YM-book, PLF-YM-cmp} for details.

\begin{lemma} \label{lem:sobolev}
Let $u= u(t, x)$ be sufficient smooth and be supported in $\{(t, x): |x|< t - 1\}$ and let $s \geq 2$, then it holds 
\bel{eq:Sobolev2}
\sup_{\Hcal_s} \big| t^{3/2} u(t, x) \big|  \lesssim \sum_{| J |\leq 2} \big\| L^J u \big\|_{L^2_f(\Hcal_s)},
\ee
with $L$ the Lorentz boosts and $J$ the multi-index. As a consequence, we also have
\be
\sup_{\Hcal_s} \big| s \hskip0.03cm t^{1/2} u(t, x) \big|  \lesssim \sum_{| J |\leq 2} \big\| (s/t) L^J u \big\|_{L^2_f(\Hcal_s)},
\ee
\end{lemma}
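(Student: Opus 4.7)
The plan is to prove the first inequality by a scaling-plus-standard-Sobolev argument centered at an arbitrary point of $\Hcal_s$, and then to deduce the second inequality by applying the first to the reweighted function $(s/t) u$.

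For the first inequality, I would fix $(t_0, x_0) \in \Hcal_s$, parametrize a neighborhood of $x_0$ on the hyperboloid by $x \mapsto (\sqrt{s^2+|x|^2}, x)$, and rescale by $y = (x - x_0)/t_0$. Setting $\widetilde u(y) := u\bigl(\sqrt{s^2 + |x_0 + t_0 y|^2},\, x_0 + t_0 y\bigr)$ on $|y| \leq 1/4$, the standard Sobolev embedding $H^2(B_{1/4}) \hookrightarrow L^\infty$ gives $|\widetilde u(0)|^2 \lesssim \sum_{|\alpha| \leq 2} \int_{|y|<1/4} |\del_y^\alpha \widetilde u(y)|^2 \, dy$. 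The crucial observation is that each $y$-derivative produces a semi-hyperboloidal derivative times $t_0$, namely $\del_{y^a} \widetilde u = t_0 \underdel_a u = (t_0/t) L_a u$, evaluated at the corresponding point of $\Hcal_s$. Using the support assumption $|x_0| < t_0 - 1$, one checks that on the ball $|x - x_0| < t_0/4$ the identity $t^2 - t_0^2 = |x|^2 - |x_0|^2$ forces $t \sim t_0$, so all factors $t_0/t$ and their higher analogues are uniformly bounded. Converting $dy = t_0^{-3}\, dx$ and taking square roots yields $t_0^{3/2} |u(t_0, x_0)| \lesssim \sum_{|J| \leq 2} \|L^J u\|_{L^2_f(\Hcal_s)}$, and since $(t_0, x_0)$ is arbitrary, this is the first estimate.

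For the second inequality, I would apply the first estimate to $w := (s/t)\, u$, so that the left-hand side becomes $\sup_{\Hcal_s} t^{3/2} |(s/t) u| = \sup_{\Hcal_s} s\, t^{1/2} |u|$. It remains to commute the Lorentz boosts past the weight $s/t$. Since $L_a(s^2) = L_a(t^2 - r^2) = 2 x^a t - 2 t x^a = 0$, one has $L_a s = 0$, and hence $L_a(s/t) = -(s/t^2) L_a t = -s x^a / t^2$, whose absolute value is bounded by $s/t$ because $r \leq t$ inside $\Kcal$. Iterating this identity, $L^J\bigl((s/t) u\bigr) = (s/t) L^J u + \text{terms bounded by } (s/t)|L^{J'} u|$ with $|J'| < |J|$, so $\|L^J w\|_{L^2_f(\Hcal_s)} \lesssim \sum_{|J'| \leq |J|} \|(s/t) L^{J'} u\|_{L^2_f(\Hcal_s)}$, and the second inequality follows.

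The main technical point I expect is simply the bookkeeping behind the scaling step: ensuring the rescaled ball of radius $1/4$ in $y$ stays inside $\Kcal$ and that the ratio $t/t_0$ is pinched between two constants there, so that derivatives of all factors $t_0/t$ introduced by the chain rule remain harmless. Everything else is linear algebra (writing $\underdel_a = L_a/t$), a change of variables, and the elementary commutator identity $L_a s = 0$.
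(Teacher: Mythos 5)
Your proposal is correct and follows the same route as the references the paper cites (Sogge; LeFloch--Ma) — the paper itself does not reprove the lemma but simply refers to those sources. The rescaling $y=(x-x_0)/t_0$ about a point of $\Hcal_s$, the identity $\del_{y^a}\widetilde u = t_0\underdel_a u = (t_0/t)L_a u$ along the hyperboloid, the pinching $t\sim t_0$ on the ball $|x-x_0|<t_0/4$, and the Jacobian $dy=t_0^{-3}dx$ are exactly the standard ingredients; and the deduction of the second estimate from the first by applying it to $(s/t)u$ together with $L_a s=0$ and $|L^J(s/t)|\lesssim s/t$ for $|J|\le 2$ is precisely the content of the paper's commutator lemma (Lemma~2.6). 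The one thing worth making explicit in a polished write-up is that when $|\alpha|=2$ the chain rule produces, in addition to $(t_0/t)^2 L_aL_b u$, a lower-order term $-(t_0^2x^a/t^3)L_bu$, which is controlled because $|x^a|\le t$ and $t\sim t_0$; you flag this as ``bookkeeping,'' and the computation indeed goes through.
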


The following Hardy-type inequality adapted to the  hyperboloidal foliation is used to estimate the $L^2$--type of norm for the wave component $u$. For the proof, one refers to \cite{PLF-YM-book} for instance.
\begin{lemma}
For all sufficiently regular functions $u$ defined and supported in the cone $\Kcal = \{(t, x): |x|< t - 1\}$, and for all $s\geq s_0$, one has 
\bel{eq:Hardy} 
\big\| r^{-1} u \big\|_{L^2_f(\Hcal_s)}
\lesssim
\sum_{a} \big\| \underdel_a u \big\|_{L^2_f(\Hcal_s)}.
\ee
\end{lemma}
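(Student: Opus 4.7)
The plan is to reduce the weighted $L^2$ bound on the hyperboloid to the classical three-dimensional Hardy inequality on $\RR^3$ by using the parametrization $t = \sqrt{s^2+r^2}$ of $\Hcal_s$. Define the spatial trace $\phi(x) := u(\sqrt{s^2+r^2}, x)$, which by the support assumption on $u$ is compactly supported in $\{|x| \leq (s^2-1)/2\}$. Then, by definition of the flat integral on $\Hcal_s$,
\be
\big\| r^{-1} u \big\|_{L^2_f(\Hcal_s)}^2 = \int_{\RR^3} \frac{|\phi(x)|^2}{|x|^2}\, dx.
\ee

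The key observation is that the tangential vector fields $\underdel_a$ act on $u|_{\Hcal_s}$ exactly as the Cartesian derivatives $\del_a$ act on $\phi$: indeed, by the chain rule,
\be
\del_a \phi(x) = \del_a u + \frac{x^a}{\sqrt{s^2+r^2}}\, \del_t u = \del_a u + \frac{x^a}{t}\, \del_t u = \underdel_a u\big|_{\Hcal_s}.
\ee
Therefore $\|\del_a \phi\|_{L^2(\RR^3)} = \|\underdel_a u\|_{L^2_f(\Hcal_s)}$, and the problem reduces to proving the classical inequality $\|\phi/|x|\|_{L^2(\RR^3)} \lesssim \sum_a \|\del_a \phi\|_{L^2(\RR^3)}$ for compactly supported $\phi$.

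For the classical step I would use the identity $\div(x/|x|^2) = 1/|x|^2$ in dimension three and integrate by parts (no boundary contribution because of the compact support of $\phi$), obtaining
\be
\int_{\RR^3} \frac{|\phi|^2}{|x|^2}\, dx = -2 \int_{\RR^3} \phi\, \frac{x}{|x|^2}\cdot \nabla \phi\, dx
\leq 2 \left\| \frac{\phi}{|x|} \right\|_{L^2(\RR^3)} \|\nabla \phi\|_{L^2(\RR^3)}.
\ee
Dividing through by $\|\phi/|x|\|_{L^2}$ yields $\|\phi/|x|\|_{L^2} \leq 2 \|\nabla \phi\|_{L^2}$, and combining with the identification of $\del_a \phi$ with $\underdel_a u|_{\Hcal_s}$ produces the desired inequality.

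The only subtle point, and the one I would be most careful about, is justifying the integration by parts and the absence of boundary terms: this is where the support hypothesis $u \subset \Kcal$ is used, since it forces the trace $\phi$ to be compactly supported inside the disk $|x| < (s^2-1)/2$ on $\Hcal_s$, so no contribution arises either at $|x|\to \infty$ or at $|x|=0$ (where one would typically approximate by cutting off near the origin and passing to the limit). Everything else is routine: the flat measure $dx$ on $\Hcal_s$ is literally Lebesgue measure on $\RR^3$ via the graph parametrization, so no Jacobian factor enters.
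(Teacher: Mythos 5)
Your proof is correct and is essentially the standard argument (the paper does not reproduce a proof but cites \cite{PLF-YM-book} for it): via the graph parametrization $t=\sqrt{s^2+r^2}$ one identifies $\underdel_a u|_{\Hcal_s}$ with $\del_a$ of the spatial trace $\phi$, notes that $L^2_f(\Hcal_s)$ is literally $L^2(\RR^3,dx)$, and invokes the classical three-dimensional Hardy inequality on the compactly supported trace. Your chain-rule computation, the remark that no Jacobian enters, and the justification of the integration by parts by cutting off near the origin are all sound.
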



\subsection{Conformal-type energy estimates on hyperboloids}
\label{subsc:conf}
We now recall a conformal-type energy, introduced in \cite{YM-HH}, which is adapted to the hyperboloidal foliation setting. This lemma allows us to obtain a robust estimate of the $L^2$-type norm for the wave component $u$.

\begin{lemma}\label{lem:conformal}
Define the conformal-type energy for a sufficiently regular function $u$, which is supported in the cone $\Kcal = \{(t, x): |x|< t - 1\}$, by
\be 
E_{con} (s, u)
:=
\int_{\Hcal_s} \Big( \sum_a \big( s \underdel_a u \big)^2 + \big( K u + 2 u \big)^2 \Big) \, dx,
\ee
where we used the notation
$$
K u 
:= \big( s \del_s + 2 x^a \underdel_a \big) u.
$$
Then we have
\bel{eq:con-estimate} 
E_{con} (s, u)^{1/2}
\leq 
E_{con} (s_0, u)^{1/2}
+
2 \int_{s_0}^s s' \big\| \Box u \big\|_{L^2_f(\Hcal_{s'})} \, ds'.
\ee
\end{lemma}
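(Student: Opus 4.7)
The plan is to adapt the classical Morawetz/conformal-energy identity to the hyperboloidal foliation. The starting observation is that the operator $K = s\del_s + 2x^a\underdel_a$ is the Minkowski conformal Killing field $K_0 = (t^2+r^2)\del_t + 2tx^a\del_a$ divided by $t$: indeed $s\del_s = ((t^2-r^2)/t)\del_t$, and adding $2x^a\underdel_a = 2(r^2/t)\del_t + 2x^a\del_a$ gives $K_0/t$. Hence $Ku + 2u = (K_0 u + 2tu)/t$ is the classical conformal multiplier rescaled by $1/t$, and $E_{con}(s, u)$ should turn out to be the flux of the standard conformal current through $\Hcal_s$ with respect to the flat measure $dx$.

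First I would record the well-known pointwise divergence identity
\begin{equation*}
(K_0 u + 2tu)\cdot(-\Box u) = \del_\alpha P^\alpha[u],
\end{equation*}
where $P^\alpha[u]$ is the quadratic current obtained by contracting the stress-energy tensor of $u$ with $K_0$ and adding a zeroth-order correction. The identity uses only that $K_0$ is a conformal Killing field of $\eta$ whose conformal factor is exactly absorbed by the $2tu$ term; no null structure is required. I would then integrate over the lens $\Kcal_{[s_0, s]}$ bounded by $\Hcal_{s_0}$, $\Hcal_s$, and the lateral cone $|x| = t-1$, on which all fluxes vanish by the compact-support assumption. The divergence theorem produces a boundary flux on each hyperboloid on the left-hand side, and rewriting the spacetime measure as $dt\,dx = (t/s')\,ds'\,dx$ converts the right-hand side into $\int_{s_0}^s s' \int_{\Hcal_{s'}} (Ku + 2u)(-\Box u)\,dx\,ds'$.

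The main algebraic step is then to identify the flux through $\Hcal_s$ with a fixed multiple of $E_{con}(s, u)$. Using $\del_t = (t/s)\del_s$ and $\del_a = -(x^a/s)\del_s + \underdel_a$ together with the future-directed unit normal to $\Hcal_s$, a completing-the-square computation should show that the $\del_s$-contributions combine with the $2u$ correction to produce $(Ku + 2u)^2$, while the tangential-derivative contributions produce $\sum_a (s\underdel_a u)^2$, with cross terms cancelling. This is the calculation originally carried out in \cite{YM-HH}; it is elementary but requires careful bookkeeping, and is the main potential obstacle in the proof.

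Once the flux identity is in hand, applying Cauchy-Schwarz on $\Hcal_{s'}$ and bounding $\|Ku + 2u\|_{L^2_f(\Hcal_{s'})} \leq E_{con}(s', u)^{1/2}$ yields an integral inequality of the form
\begin{equation*}
E_{con}(s, u) \leq E_{con}(s_0, u) + C\int_{s_0}^s s'\, E_{con}(s', u)^{1/2}\, \|\Box u\|_{L^2_f(\Hcal_{s'})}\,ds'
\end{equation*}
for some numerical constant $C$, from which the claimed bound \eqref{eq:con-estimate} follows via the standard trick: writing $F(s) = E_{con}(s, u)^{1/2}$ and differentiating $F^2$, one sees that $F'(s) \leq (C/2)\,s\,\|\Box u\|_{L^2_f(\Hcal_s)}$ wherever $F > 0$, and integrating gives the desired estimate after matching $C/2 = 2$.
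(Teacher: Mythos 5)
The paper offers no proof of this lemma: the surrounding text only says that the conformal-type energy was ``introduced in \cite{YM-HH}'' and recalls the statement, so there is nothing in the present paper to compare your argument against. On its own terms, your outline is a sound reconstruction of what a proof along the lines of \cite{YM-HH} must do. The identification $K = K_0/t$ and $Ku + 2u = (K_0 u + 2tu)/t$, with $K_0 = (t^2+r^2)\del_t + 2tx^a\del_a$ the classical Morawetz conformal Killing field, is correct, and the pointwise identity $(K_0 u + 2tu)(-\Box u) = \del_\alpha P^\alpha[u]$ is indeed the standard conformal-multiplier divergence identity with no null structure needed; it is available here in particular because $\Box(2t) = 0$, so no lower-order remainder survives.

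Two details deserve attention. First, a slip in the change of variables: on $\Hcal_{s'}$ one has $t = \sqrt{s'^2 + |x|^2}$, hence $\del t/\del s' = s'/t$ and the Jacobian is $dt\,dx = (s'/t)\,ds'\,dx$, not $(t/s')\,ds'\,dx$ as you wrote. The conclusion you then state, $\int_{s_0}^{s} s' \int_{\Hcal_{s'}} (Ku+2u)(-\Box u)\,dx\,ds'$, is in fact what the \emph{correct} Jacobian produces once one also uses $K_0 u + 2tu = t(Ku+2u)$, so this reads as a transcription error rather than a mistake that propagates, but it should be fixed. Second, and more substantively, the step you yourself flag as ``the main potential obstacle'' --- verifying that the flux of $P^\alpha$ through $\Hcal_s$ with respect to the flat measure $dx$ equals precisely $\sum_a (s\underdel_a u)^2 + (Ku+2u)^2$, with the cross terms cancelling and with the normalization that yields the constant $2$ in \eqref{eq:con-estimate} --- is the entire content of the lemma. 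Deferring it to \cite{YM-HH} is acceptable for a result the paper itself cites without proof, but it means your write-up remains an outline: as it stands, the numerical constant (your $C=4$, i.e.\ $C/2=2$) is asserted rather than derived, and a reader cannot check it without carrying out the flux computation.
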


\begin{lemma}\label{lem:L2type}
We have in the cone $\Kcal$ that
\bel{eq:l2type-wave} 
\big\| (s / t) u \big\|_{L^2_f(\Hcal_s)}
+
\big\| (s / t) L_0 u \big\|_{L^2_f(\Hcal_s)}
+
\sum_a \big\| (s / t) L_a u \big\|_{L^2_f(\Hcal_s)}
\lesssim
E_{con} (s, u)^{1/2}.
\ee

\begin{proof}

First, by the definition of the conformal energy it is obvious to see that
$$
\sum_a \big\| (s / t) L_a u \big\|_{L^2_f(\Hcal_s)}
\leq
E_{con} (s, u)^{1/2}.
$$

Second, the Hardy inequality \eqref{eq:Hardy} gives
$$
\big\| (1 / r) u \big\|_{L^2_f(\Hcal_s)}
\lesssim
\sum_a \big\| \underdel_a u \big\|_{L^2_f(\Hcal_s)},
$$
which further yields
$$
\big\| (s / r) u \big\|_{L^2_f(\Hcal_s)}
\lesssim
\sum_a \big\| s \underdel_a u \big\|_{L^2_f(\Hcal_s)}
\lesssim
E_{con} (s, u)^{1/2}.
$$
Since the relation $r < t$ holds within the cone $\Kcal$, we thus obtain
$$
\big\| (s / t) u \big\|_{L^2_f(\Hcal_s)}
\lesssim
E_{con} (s, u)^{1/2}.
$$

In order to take into account the scaling vector field $L_0$, we next express the scaling vector field in the semi-hyperboloidal frame
$$
L_0
=
t \del_t + x^a \del_a
=
s \del_s + x^a \underdel_a.
$$
Recall that
$$
K u 
= \big( s \del_s + 2 x^a \underdel_a \big) u
= L_0 u + x^a \underdel_a u. 
$$
Hence the triangle inequality implies that
$$
\aligned
\big\| (s/t) L_0 u \big\|_{L^2_f(\Hcal_s)}
&\leq
\big\| (s/t) K u \big\|_{L^2_f(\Hcal_s)}
+
\big\| (s/t)  x^a \underdel_a u \big\|_{L^2_f(\Hcal_s)}
\\
&=
\big\| (s/t) K u \big\|_{L^2_f(\Hcal_s)}
+
\big\| (s/t)  (x^a/t) L_a u \big\|_{L^2_f(\Hcal_s)}.
\endaligned
$$
Finally the facts that $s<t, |x^a| < t$ in the cone $\Kcal$ indicates that
$$
\big\| (s/t) L_0 u \big\|_{L^2_f(\Hcal_s)}
\lesssim
E_{con}(s, u)^{1/2}.
$$

The proof is complete.

\end{proof}

\end{lemma}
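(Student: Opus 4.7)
The plan is to bound each of the three terms on the left-hand side separately, in the order (i) the $L_a$ part, (ii) the $u$ part, (iii) the $L_0$ part, using in each step only what is provided by the conformal energy and the Hardy inequality \eqref{eq:Hardy}.

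First, since $\underdel_a = L_a/t$ we have the identity $s \underdel_a u = (s/t) L_a u$, so by the very definition of $E_{con}(s, u)$ the bound
$$
\sum_a \big\| (s/t) L_a u \big\|_{L^2_f(\Hcal_s)} \lesssim E_{con}(s, u)^{1/2}
$$
is immediate. This step requires no real work and serves mainly to record the identification of $(s/t)L_a u$ with one of the building blocks of the conformal energy.

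Next, for the norm of $(s/t) u$ itself, I would invoke the Hardy-type inequality \eqref{eq:Hardy} to get $\| u/r \|_{L^2_f(\Hcal_s)} \lesssim \sum_a \|\underdel_a u\|_{L^2_f(\Hcal_s)}$, multiply through by $s$ (which is constant on $\Hcal_s$), and recognise $\| s \underdel_a u \|_{L^2_f(\Hcal_s)}$ as controlled by $E_{con}(s,u)^{1/2}$. This yields $\|(s/r) u\|_{L^2_f(\Hcal_s)} \lesssim E_{con}(s,u)^{1/2}$, and the bound on $\|(s/t)u\|_{L^2_f(\Hcal_s)}$ follows pointwise from the fact that $r < t$ inside $\Kcal$, which gives $s/t \leq s/r$.

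Finally, for the scaling field $L_0$ the strategy is to express $L_0$ in terms of $K$ and tangential derivatives. Writing $L_0 = s\del_s + x^a \underdel_a$ and comparing with $Ku = s\del_s u + 2 x^a \underdel_a u$, we obtain $L_0 u = Ku - x^a \underdel_a u$, or equivalently $L_0 u = (Ku + 2u) - 2u - x^a\underdel_a u$. The triangle inequality then gives
$$
\big\|(s/t) L_0 u \big\|_{L^2_f(\Hcal_s)}
\leq \big\|(s/t)(Ku + 2u)\big\|_{L^2_f(\Hcal_s)}
+ 2\big\|(s/t) u\big\|_{L^2_f(\Hcal_s)}
+ \big\|(s/t)\, x^a \underdel_a u\big\|_{L^2_f(\Hcal_s)}.
$$
The first term on the right is controlled by $E_{con}(s,u)^{1/2}$ directly from its definition (using $s/t \leq 1$ in $\Kcal$), the second was just handled in the previous paragraph, and the third is rewritten as $\|(s/t)(x^a/t) L_a u\|_{L^2_f(\Hcal_s)}$ and bounded by $\sum_a \|(s/t) L_a u\|_{L^2_f(\Hcal_s)}$ using $|x^a|/t < 1$ in the cone, which is handled by step (i). The only subtle point is the bookkeeping around the $+2u$ inside the conformal energy, which is why step (ii) must be done before step (iii); otherwise the argument is entirely mechanical.
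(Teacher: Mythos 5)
Your proof is correct and follows essentially the same route as the paper: the $L_a$ term drops out of the definition of $E_{con}$, the $u$ term comes from the Hardy inequality together with $r<t$, and the $L_0$ term is handled by rewriting $L_0 u$ in terms of $Ku$ and $x^a\underdel_a u$. Your version is in fact a bit more careful than the paper's, since you explicitly split $L_0 u = (Ku+2u) - 2u - x^a\underdel_a u$ to match the building block $Ku+2u$ that actually appears in $E_{con}$, whereas the paper writes the intermediate bound in terms of $\|(s/t)Ku\|$ and silently absorbs the $2u$ shift; your bookkeeping makes this explicit and clarifies why the estimate on $\|(s/t)u\|$ must be established first.
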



\subsection{Estimates for commutators and null forms}

We recall the following standard estimates for null forms, which can be found in \cite{Sogge}.

\begin{lemma}\label{lem:null-classical}
It holds
\be 
\del_\alpha u \del^\alpha v
=
-{1\over t} \big( \del_t v L_0 u - L_a v \del^a u \big).
\ee
\end{lemma}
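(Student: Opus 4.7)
The identity is a direct algebraic computation in Cartesian coordinates, so my plan is simply to expand both sides and match them, but I want to choose the expansion that makes the symmetry between $L_0$ acting on $u$ and $L_a$ acting on $v$ visible.

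First I would unpack the null form using the Minkowski metric $\eta = \operatorname{diag}(-1,1,1,1)$:
\[
\del_\alpha u \, \del^\alpha v = -\del_t u \, \del_t v + \del_a u \, \del^a v.
\]
Then I would rewrite \emph{only} the factors that involve $u$ with $\del_t$ and the factors involving $v$ with $\del_a$, using the two defining relations
\[
L_0 u = t \del_t u + x^a \del_a u, \qquad L_a v = x^a \del_t v + t \del_a v,
\]
which I solve for
\[
\del_t u = \tfrac{1}{t}\bigl( L_0 u - x^a \del_a u \bigr), \qquad \del_a v = \tfrac{1}{t}\bigl( L_a v - x^a \del_t v \bigr).
\]

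Substituting these into the two pieces of the null form gives
\[
-\del_t u \, \del_t v = -\tfrac{1}{t}\, L_0 u \, \del_t v + \tfrac{x^a}{t}\, \del_a u \, \del_t v,
\]
\[
\del_a u \, \del^a v = \tfrac{1}{t}\, L_a v \, \del^a u - \tfrac{x^a}{t}\, \del_a u \, \del_t v.
\]
Adding these two identities, the two $\tfrac{x^a}{t}\,\del_a u \, \del_t v$ terms cancel, leaving exactly
\[
\del_\alpha u \, \del^\alpha v = -\tfrac{1}{t}\bigl( \del_t v \, L_0 u - L_a v \, \del^a u \bigr),
\]
which is the claim.

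There is really no obstacle here: it is a one-line algebraic identity, and the only care needed is keeping track of the sign from the Lorentzian signature and the cancellation of the two $\tfrac{x^a}{t}\,\del_a u \, \del_t v$ cross terms. The cancellation is precisely the reason why the classical null form can be re-expressed with the scaling vector field acting on $u$ only and no loss elsewhere, which is exactly the structural feature exploited in the bootstrap argument later.
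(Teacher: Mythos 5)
Your computation is correct: solving $L_0 u = t\del_t u + x^a\del_a u$ for $\del_t u$ and $L_a v = x^a\del_t v + t\del_a v$ for $\del_a v$, substituting into $-\del_t u\,\del_t v + \del_a u\,\del^a v$, and observing the cancellation of the two $\tfrac{x^a}{t}\del_a u\,\del_t v$ cross terms gives exactly the claimed identity. The paper states this lemma without proof (citing Sogge), and your direct algebraic expansion is precisely the standard verification, so there is no divergence in approach to report.
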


Next, we revisit the following commutation relations from \cite{Sogge}.

\begin{lemma}\label{lem:commu-eq}
For all nice functions $u, v$ we have
\be
\aligned
\del_\alpha \big( \del_\gamma u \del^\gamma v \big)
&=
\del_\gamma \del_\alpha u \del^\gamma v + \del_\gamma u \del^\gamma \del_\alpha v,
\\
L_a \big( \del_\gamma u \del^\gamma v \big)
&=
\del_\gamma L_a u \del^\gamma v + \del_\gamma u \del^\gamma L_a v.
\endaligned
\ee
\end{lemma}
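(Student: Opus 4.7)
The plan is to prove both identities by direct application of the Leibniz rule, with the only subtle point being that for the Lorentz boost the commutator corrections coming from the two factors must cancel against each other.

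For the first identity, I would simply observe that $\del_\alpha$ is a derivation on smooth functions and commutes with every $\del_\gamma$ (and hence with $\del^\gamma$, since $\eta$ is constant). Applying Leibniz to the product $\del_\gamma u \, \del^\gamma v$ therefore yields the stated formula immediately, with no commutator contribution to track.

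For the second identity, I would first apply the Leibniz rule for the vector field $L_a$ to write
\[
L_a \bigl( \del_\gamma u \, \del^\gamma v \bigr) = L_a(\del_\gamma u)\, \del^\gamma v + \del_\gamma u \, L_a(\del^\gamma v),
\]
and then use $L_a \del_\gamma = \del_\gamma L_a + [L_a, \del_\gamma]$ on each factor. From the explicit form $L_a = x^a \del_t + t \del_a$, a short direct calculation gives the commutators
\[
[L_a, \del_t] = -\del_a, \qquad [L_a, \del_b] = -\delta_{ab}\, \del_t.
\]
Raising one index with $\eta = \mathrm{diag}(-1,1,1,1)$ and summing over $\gamma$, the total commutator contribution to $L_a(\del_\gamma u \, \del^\gamma v)$ becomes
\[
\bigl( \del_a u\, \del_t v - \del_t u\, \del_a v \bigr) + \bigl( \del_t u\, \del_a v - \del_a u\, \del_t v \bigr) = 0,
\]
leaving exactly the desired formula. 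Conceptually, this cancellation reflects the fact that the boosts $L_a$ are Killing vector fields for the Minkowski metric, so they preserve the pairing $\del_\gamma u\, \del^\gamma v$ up to the Leibniz term with respect to $L_a$ itself.

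The only step that requires any genuine thought is this commutator cancellation, which is a short index-chasing computation; everything else is bookkeeping. Since the commutators $[L_a, \del_\gamma]$ are first-order operators, the whole argument reduces to finite algebra and poses no analytic obstacle.
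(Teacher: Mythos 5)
Your proof is correct, and since the paper simply cites Sogge for this lemma without reproducing an argument, your direct Leibniz-plus-commutator verification is exactly the standard proof one would supply. The computation of $[L_a,\del_t]=-\del_a$, $[L_a,\del_b]=-\delta_{ab}\del_t$ is right, the index-raising with $\eta=\mathrm{diag}(-1,1,1,1)$ is handled correctly (note $\del^0=-\del_t$, which is what produces the sign making the two commutator contributions opposite), and the closing remark that the cancellation reflects $L_a$ being a Killing field of $\eta$ correctly identifies why the result must hold without further computation.
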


The following estimates for commutators will also be frequently used, and one refers to \cite{Sogge, PLF-YM-book} for the proof.

\begin{lemma}
Within the cone $\Kcal$ it holds for any nice function $u$ that
\be 
\aligned
\big|\del_\alpha L_a u \big|
&\lesssim
\big| L_a \del_\alpha u \big| + \sum_\beta \big| \del_\beta u \big|,
\\
\big| L_a L_b u \big|
&\lesssim
\big| L_b L_a u \big| + \sum_m \big| L_m u \big|,
\\
\big| L_a L_0 u \big|
&\lesssim
\big| L_0 L_a u \big| + \sum_\alpha \big(\big| L_\alpha u \big| + \big| \del_\alpha u \big| \big),
\\
\big| L_a (u \,s/t) \big|
&\lesssim
\big| (s/t) L_a u \big| +  \big| (s/t) u \big|,
\\
\big| L_b L_a (u \,s/t) \big| 
&\lesssim
\big| (s/t) L_b L_a u \big| 
+ \big| (s/t) u \big| + \sum_m \big| (s/t) L_m u \big|.
\endaligned
\ee
\end{lemma}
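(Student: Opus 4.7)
My approach is entirely by direct computation of commutators, using the explicit expressions $L_a = x^a \del_t + t \del_a$, the elementary identities $L_a s = 0$ and $L_a(1/t) = -x^a/t^2$, and the cone restriction $|x^a| < t$ available throughout $\Kcal$.

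For the first inequality I would expand $\del_\alpha(L_a u) = \del_\alpha(x^a \del_t u + t \del_a u)$ via the Leibniz rule and subtract $L_a \del_\alpha u = x^a \del_\alpha \del_t u + t \del_\alpha \del_a u$; the only surviving terms are $\del_\alpha(x^a) \del_t u + \del_\alpha(t) \del_a u$, each a single coordinate partial derivative of $u$, which yields the claim. For the second inequality the same Leibniz computation produces after cancellations
$$
[L_a, L_b] u = x^a \del_b u - x^b \del_a u.
$$
Inverting $L_c = x^c \del_t + t \del_c$ to write $\del_c = (L_c - x^c \del_t)/t$ (the $\del_t$-contributions cancel by antisymmetry in $a,b$), I obtain $[L_a, L_b] u = (x^a L_b u - x^b L_a u)/t$; since $|x^a|/t < 1$ in $\Kcal$, this is bounded by $\sum_m |L_m u|$.

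For the $(s/t)$-weighted identities, the key fact is that $\del_t s = t/s$ and $\del_a s = -x^a/s$, so $L_a s = x^a(t/s) + t(-x^a/s) = 0$; hence $L_a$ annihilates $s$. Consequently
$$
L_a\bigl(u \cdot s/t\bigr) = (s/t)\, L_a u + u\cdot s \cdot L_a(1/t) = (s/t)\, L_a u - \frac{x^a}{t}\cdot \frac{s}{t}\, u,
$$
and $|x^a|/t < 1$ gives the third estimate. For the last inequality I would apply $L_b$ to this identity. The Leibniz rule produces $(s/t) L_b L_a u$ together with a correction $(x^b/t)(s/t) L_a u$, while differentiating the second term $(s x^a/t^2) u$ generates contributions $(x^a/t)(s/t) L_b u$ (from $L_b u$) plus terms of the form $(x^a x^b/t^2)(s/t) u$ and $\delta_{ab}(s/t) u$ coming from $L_b(x^a/t^2) = -2 x^a x^b/t^3 + \delta_{ab}/t$ (again using $L_b s = 0$). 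Every correction is controlled by $|(s/t) L_m u|$ or $|(s/t) u|$ via $|x^a|/t < 1$.

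I do not anticipate a substantive obstacle: each step is a Leibniz computation followed by the cone bound $|x^a| < t$. The one point deserving care is bookkeeping of the $(s/t)$ weight, since the right-hand sides of the last two inequalities place $(s/t)$ \emph{outside} the boosts; factoring $s x^a/t^2 = (x^a/t)(s/t)$ at each application of $L_a$ or $L_b$ is exactly what makes the estimates close in the form stated.
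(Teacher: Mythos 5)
Your computation is correct throughout. The paper itself does not prove this lemma — it delegates to \cite{PLF-YM-book} — but your direct Leibniz/commutator argument is the standard one: the commutation relations $[\del_t, L_a]u = \del_a u$, $[\del_b, L_a]u = \delta_{ab}\del_t u$, and $[L_a, L_b]u = x^a\del_b u - x^b\del_a u = (x^a L_b u - x^b L_a u)/t$ give the first two estimates via $|x^a|/t < 1$ inside $\Kcal$, and the key observation $L_a s = x^a(t/s) + t(-x^a/s) = 0$ together with $L_a(1/t) = -x^a/t^2$ drives the two weighted estimates, with the $(s/t)$ weight factoring out cleanly exactly as you note.

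One small bookkeeping remark: for the fourth inequality your expansion yields
$$
L_b L_a\bigl(u\,s/t\bigr)
= (s/t)\,L_b L_a u
- \frac{x^b}{t}\,\frac{s}{t}\,L_a u
- \frac{x^a}{t}\,\frac{s}{t}\,L_b u
+ 2\,\frac{x^a x^b}{t^2}\,\frac{s}{t}\,u
- \delta_{ab}\,\frac{s}{t}\,u,
$$
and every coefficient $x^a/t$, $x^a x^b/t^2$, $\delta_{ab}$ is bounded by $1$ in $\Kcal$, which is precisely what lands you on the stated right-hand side with $\sum_m |(s/t) L_m u|$ rather than any un-weighted boost terms. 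There is no gap here.
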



\section{Bootstrap argument}
\label{sec:BA}

\subsection{Bootstrap assumptions and its consequences}\label{subsec:BA}

Our goal in this section is to prove Theorem \ref{thm:wKG} by relying on a standard bootstrap argument.

According to the local well-posedness result of wave and Klein-Gordon equations and the continuation of the energies, we assume that the following bounds are true for all $s \in [s_0, T)$ (with $s_0 = 2$):
\bel{eq:BA} 
\aligned
E_{con}(s, \del^I L^J u)^{1/2}
&\leq
C_1 \eps s^{1/2 +\delta},
\qquad
&|I| + |J| \leq N-1,
\\
E_{con}(s, \del^I L^J u)^{1/2}
&\leq
C_1 \eps s^\delta,
\qquad
&|I| + |J| \leq N-2,
\\
E_{con}(s, \del^I L^J u)^{1/2}
&\leq
C_1 \eps,
\qquad
&|I| + |J| \leq N-3,
\\
E_1(s, \del^I L^J v)^{1/2} + E(s, \del^I L^J u)^{1/2}
&\leq 
C_1 \eps s^\delta,
\qquad
& |I| + |J| \leq N,
\\
E_1(s, \del^I L^J v)^{1/2} + E(s, \del^I L^J u)^{1/2}
&\leq 
C_1 \eps,
\qquad
& |I| + |J| \leq N-1.
\endaligned
\ee
In the above, the constant $C_1>1$ is some large number to be determined, $\delta>0$ is a small constant, and the time $T$ is defined by
$$
T := \sup \{s > s_0 : \eqref{eq:BA}~holds  \}.
$$

We want to prove the following refined estimates (with $T < + \infty$)
\bel{eq:BA-refined}
\aligned
E_{con}(s, \del^I L^J u)^{1/2}
&\leq
{1\over 2} C_1 \eps s^{1/2 + \delta},
\qquad
&|I| + |J| \leq N-1,
\\
E_{con}(s, \del^I L^J u)^{1/2}
&\leq
{1\over 2} C_1 \eps s^\delta,
\qquad
&|I| + |J| \leq N-2,
\\
E_{con}(s, \del^I L^J u)^{1/2}
&\leq
{1\over 2} C_1 \eps,
\qquad
&|I| + |J| \leq N-3,
\\
E_1(s, \del^I L^J v)^{1/2} + E(s, \del^I L^J u)^{1/2}
&\leq 
{1\over 2} C_1 \eps s^\delta,
\qquad
& |I| + |J| \leq N,
\\
E_1(s, \del^I L^J v)^{1/2} + E(s, \del^I L^J u)^{1/2}
&\leq 
{1\over 2} C_1 \eps,
\qquad
& |I| + |J| \leq N-1.
\endaligned
\ee
which immediately imply the global existence in Theorem \ref{thm:wKG}.

By recalling the Sobolev inequalities and the estimates for commutators, we have the following set of estimates from the bootstrap assumptions \eqref{eq:BA}.

\begin{proposition}\label{prop:BA1}
Under the assumptions in \eqref{eq:BA}, it holds for all $s \in [s_0, T)$ that
\bei

\item $L^2$--type estimates:
\be 
\aligned
&\big\| (s/t) \del \del^I L^J u, (s/t) \del^I L^J \del u, (s/t) \del \del^I L^J v, (s/t) \del^I L^J \del v, \del^I L^J v \big\|_{L^2_f(\Hcal_s)}
\\
\lesssim & C_1 \eps s^\delta,
\qquad
|I| + |J| = N,
\\
&\big\| (s/t) \del \del^I L^J u, (s/t) \del^I L^J \del u, (s/t) \del \del^I L^J v, (s/t) \del^I L^J \del v, \del^I L^J v \big\|_{L^2_f(\Hcal_s)}
\\
\lesssim & C_1 \eps,
\qquad
|I| + |J| \leq N-1.
\endaligned
\ee

\item $L^\infty$--type estimates:
\be 
\aligned
&\big\| s t^{1/2} \del \del^I L^J u, s t^{1/2} \del^I L^J \del u, s t^{1/2} \del \del^I L^J v, s t^{1/2} \del^I L^J \del v, t^{3/2} \del^I L^J v \big\|_{L^\infty(\Hcal_s)}
\\
\lesssim & C_1 \eps s^\delta,
\qquad
|I| + |J| = N-2,
\\
&\big\| s t^{1/2} \del \del^I L^J u, s t^{1/2} \del^I L^J \del u, s t^{1/2} \del \del^I L^J v, s t^{1/2} \del^I L^J \del v, t^{3/2} \del^I L^J v \big\|_{L^\infty(\Hcal_s)}
\\
\lesssim & C_1 \eps,
\qquad
|I| + |J| \leq N-3.
\endaligned
\ee

\eei

\end{proposition}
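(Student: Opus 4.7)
The plan is to derive both sets of estimates as direct consequences of the bootstrap assumptions \eqref{eq:BA}, combined with the definition of the energy functional \eqref{eq:2energy}, the commutator relations of the previous subsection, and the Sobolev-type inequality in Lemma \ref{lem:sobolev}. The argument is essentially bookkeeping: translate the energy bounds into $L^2$ norms of the quantities appearing in the statement, then apply the Sobolev embedding on hyperboloids to promote them to $L^\infty$ bounds, absorbing a loss of two derivatives.

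For the $L^2$ estimates, I start from the three equivalent expressions for $E_m(s,\phi)$ in \eqref{eq:2energy}, which give $\|(s/t)\del_t\phi\|_{L^2_f(\Hcal_s)} + \|(s/t)\del_a\phi\|_{L^2_f(\Hcal_s)} \lesssim E_m(s,\phi)^{1/2}$ and, when $m>0$, $\|\phi\|_{L^2_f(\Hcal_s)} \leq m^{-1} E_m(s,\phi)^{1/2}$. Applying this with $\phi = \del^I L^J u$ (using $E_0=E$) and $\phi = \del^I L^J v$ (using $E_1$), the bootstrap assumption yields $\|(s/t)\del\del^I L^J u\|_{L^2_f(\Hcal_s)}$, $\|(s/t)\del\del^I L^J v\|_{L^2_f(\Hcal_s)}$ and $\|\del^I L^J v\|_{L^2_f(\Hcal_s)}$ directly. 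To swap the order of the derivative and the vector fields, I use the commutator lemma: since $[\del_\alpha, L_a]$ is itself a partial derivative, the quantity $\del^I L^J \del u$ differs from $\del\del^I L^J u$ by a linear combination of terms of the form $\del^{I'} L^{J'} \del u$ with $|I'|+|J'| < |I|+|J|$, each of which falls under a lower line of the bootstrap and is in fact better. Tracking indices shows that whenever the total order $|I|+|J|$ equals $N$ one collects the factor $s^\delta$, while for $|I|+|J|\leq N-1$ one stays at the constant bound $C_1\eps$.

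For the $L^\infty$ estimates I apply the second inequality of Lemma \ref{lem:sobolev}, namely $\sup_{\Hcal_s}|s t^{1/2} w| \lesssim \sum_{|K|\leq 2}\|(s/t)L^K w\|_{L^2_f(\Hcal_s)}$, to $w = \del \del^I L^J u$, $w=\del^I L^J \del u$, and the corresponding expressions for $v$; and the first inequality $\sup_{\Hcal_s}|t^{3/2} w| \lesssim \sum_{|K|\leq 2}\|L^K w\|_{L^2_f(\Hcal_s)}$ to $w = \del^I L^J v$. Each factor $L^K$ on the right side is then commuted through $\del$ and $L^J$ via the commutator estimates, producing quantities of the same type but of total order at most $|I|+|J|+2$. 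Consequently, starting at $|I|+|J| \leq N-2$ keeps the total order bounded by $N$, so the $L^2$ bounds established in the previous step give the claimed $C_1 \eps s^\delta$ decay; starting at $|I|+|J| \leq N-3$ leaves the total order at $\leq N-1$, producing the constant $C_1 \eps$ bound.

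The proof is essentially a routine unpacking of definitions and there is no serious obstacle; the only point requiring a little care is bookkeeping, namely verifying that after all commutations and the two-derivative Sobolev loss the worst surviving term still lies within the range covered by \eqref{eq:BA} with the correct power of $s$.
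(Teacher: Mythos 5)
Your proposal is correct and takes essentially the same route that the paper itself indicates: the paper gives no detailed proof of this proposition, merely remarking that it follows from the Sobolev-type inequality of Lemma \ref{lem:sobolev} and the commutator estimates, which is exactly the bookkeeping you carry out (energy definition $\Rightarrow$ $L^2$ bounds, commutators to reorder $\partial$ and $L$, then the two-derivative Sobolev loss to pass to $L^\infty$, with the index count confirming that $N-2$ lands on the $s^\delta$ line and $N-3$ on the constant line).
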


If we further utilise the estimates in Lemma \ref{lem:L2type}, then we are led to the set of estimates right below.

\begin{proposition}\label{prop:BA2}

We have
\bei

\item $L^2$--type estimates:
\be 
\aligned
&\big\| (s / t) \del^I L^J u \big\|_{L^2_f(\Hcal_s)}
+
\big\| (s / t) L_0 \del^I L^J u \big\|_{L^2_f(\Hcal_s)}
+
\sum_a \big\| (s / t) L_a \del^I L^J u \big\|_{L^2_f(\Hcal_s)}
\\
\lesssim
&C_1 \eps s^{1/2 + \delta},
\qquad
|I| + |J| \leq N-1,
\\
&\big\| (s / t) \del^I L^J u \big\|_{L^2_f(\Hcal_s)}
+
\big\| (s / t) L_0 \del^I L^J u \big\|_{L^2_f(\Hcal_s)}
+
\sum_a \big\| (s / t) L_a \del^I L^J u \big\|_{L^2_f(\Hcal_s)}
\\
\lesssim
&C_1 \eps s^\delta, 
\qquad
|I| + |J| \leq N-2,
\\
&\big\| (s / t) \del^I L^J u \big\|_{L^2_f(\Hcal_s)}
+
\big\| (s / t) L_0 \del^I L^J u \big\|_{L^2_f(\Hcal_s)}
+
\sum_a \big\| (s / t) L_a \del^I L^J u \big\|_{L^2_f(\Hcal_s)}
\\
\lesssim
&C_1 \eps,
\qquad
|I| + |J| \leq N-3.
\endaligned
\ee

\item $L^\infty$--type estimates:
\be 
\aligned
\big\| s t^{1/2} \del^I L^J u \big\|_{L^\infty(\Hcal_s)}
+
\sum_\alpha \big\| s t^{1/2} L_\alpha \del^I L^J u \big\|_{L^\infty(\Hcal_s)}
\lesssim
C_1 \eps,
\qquad
|I| + |J| \leq N-4.
\endaligned
\ee

\eei
\end{proposition}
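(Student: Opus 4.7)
The plan is to deduce both parts of Proposition \ref{prop:BA2} from the conformal-energy bootstrap bounds in \eqref{eq:BA}, by combining Lemma \ref{lem:L2type} with the Sobolev inequality of Lemma \ref{lem:sobolev} and the standard commutator estimates.

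For the $L^2$--type estimates, I would apply Lemma \ref{lem:L2type} with $u$ replaced by $\del^I L^J u$, yielding
\be
\big\|(s/t)\del^I L^J u\big\|_{L^2_f(\Hcal_s)}
+ \big\|(s/t) L_0 \del^I L^J u\big\|_{L^2_f(\Hcal_s)}
+ \sum_a \big\|(s/t) L_a \del^I L^J u\big\|_{L^2_f(\Hcal_s)}
\lesssim E_{con}(s, \del^I L^J u)^{1/2}.
\ee
The three conformal-energy bootstrap bounds in \eqref{eq:BA} (at orders $\leq N-1$, $=N-3$, and $\leq N-4$) then immediately give the three claimed $L^2$--type bounds. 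The middle bound $\lesssim C_1 \eps s^\delta$ extends automatically from $|I|+|J|=N-3$ to the full range $|I|+|J|\leq N-3$, since the lower-order estimate $\lesssim C_1 \eps$ already covers $|I|+|J|\leq N-4$.

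For the $L^\infty$--type estimate, I would apply the second Sobolev inequality in Lemma \ref{lem:sobolev}: for any sufficiently regular $w$ supported in $\Kcal$,
\be
\sup_{\Hcal_s} \big| s t^{1/2} w \big|
\lesssim \sum_{|K|\leq 2} \big\|(s/t) L^K w\big\|_{L^2_f(\Hcal_s)}.
\ee
Taking $w = \del^I L^J u$ with $|I|+|J|\leq N-6$ and commuting $L^K$ past $\del^I$ through the commutator estimates recalled in the preliminaries, the right-hand side reduces at leading order to $\|(s/t)\del^I L^{J+K} u\|_{L^2_f}$ plus strictly lower-order contributions; since the total order is $|I|+|J|+|K|\leq N-4$, the $L^2$--type bound just established yields $\|s t^{1/2}\del^I L^J u\|_{L^\infty(\Hcal_s)} \lesssim C_1 \eps$.

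The delicate part, and the main obstacle, is the term $L_a \del^I L^J u$. Applying Sobolev to $w = L_a \del^I L^J u$ and commuting produces a leading term $L_a \del^I L^{J+K} u$ of total order up to $|I|+|J|+|K|+1\leq N-3$, so a naive reduction to the $L^2$--type estimate of Proposition \ref{prop:BA2} would only give $\lesssim C_1 \eps s^\delta$, which is too weak. Instead, I would keep $L_a$ inside the norm and invoke Lemma \ref{lem:L2type} applied to the function $\del^I L^{J+K} u$ itself:
\be
\big\|(s/t) L_a \del^I L^{J+K} u\big\|_{L^2_f(\Hcal_s)}
\lesssim E_{con}\big(s,\, \del^I L^{J+K} u\big)^{1/2} \lesssim C_1 \eps,
\ee
where the last inequality uses \eqref{eq:BA} at order $|I|+|J|+|K|\leq N-4$. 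This is precisely why the threshold is $N-6$ rather than $N-7$: one must not commute $L_a$ outward, but instead exploit the one-derivative gain built into Lemma \ref{lem:L2type}, which controls $\|(s/t) L_a(\cdot)\|_{L^2_f}$ by the conformal energy of $(\cdot)$ at the same order, not one higher.
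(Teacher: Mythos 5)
The paper itself does not write out a proof of Proposition~\ref{prop:BA2}; it simply remarks that it follows by ``further utilising the estimates in Lemma~\ref{lem:L2type}''. Your proposal fills in exactly the argument that the paper intends, and it is correct. For the $L^2$--type estimates you apply Lemma~\ref{lem:L2type} to $\del^I L^J u$ and read off the three bounds directly from the conformal-energy lines of \eqref{eq:BA}, noting that the $|I|+|J|=N-3$ bound combines with the $|I|+|J|\leq N-4$ bound to cover the full range $|I|+|J|\leq N-3$; this is immediate. For the $L^\infty$--type estimates you apply the second Sobolev inequality of Lemma~\ref{lem:sobolev}, commute the two extra Lorentz boosts inward, and control the leading terms by the $L^2$--type bound at order $\leq N-4$, with the single outermost $L_a$ absorbed by Lemma~\ref{lem:L2type} rather than counted as an extra order; the remaining commutator terms are lower order and handled the same way.

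One small remark on phrasing: your claim that ``a naive reduction to the $L^2$--type estimate of Proposition~\ref{prop:BA2} would only give $\lesssim C_1\eps s^\delta$'' is only true if one first commutes $L_a$ to the inside and relabels $L_a\del^I L^{J+K}u$ as $\del^{I'}L^{J'}u$ of total order $\leq N-3$. If instead one reads the third displayed $L^2$--type bound as stated --- with $L_a$ kept explicitly outside $\del^I L^J u$ at order $|I|+|J|\leq N-4$ --- then it already gives $\lesssim C_1\eps$, and there is nothing further to do; the ``non-naive'' route you then take (invoking Lemma~\ref{lem:L2type} directly on $\del^I L^{J+K}u$) is exactly how that third bound was obtained. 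So the distinction you draw between a ``naive'' and a ``non-naive'' reduction is more apparent than real, but your final reasoning and the resulting threshold $N-6$ are correct.
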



\subsection{Improved estimates for the wave component $u$}

In this part, we will prove the improved estimates for the wave component $u$, including its energy estimates $E(s, \del^I L^J u)^{1/2}$ as well as its conformal energy estimates $E_{con}(s, \del^I L^J u)^{1/2}$. This part is the main part of Section \ref{sec:BA}.

First, we show the refined estimates for the natural energy of the wave component $u$.
Recall the wave equation in \eqref{eq:model}, and we act $\del^I L^J$ on it to get
\bel{eq:model-del}
-\Box \del^I L^J u 
= \del^I L^J Q_{u0}(u; v, \del v) + \del^I L^J Q_{u1}(\del u; v, \del v),
\ee
and this will be used to obtain improved estimates on $\del^I L^J u$.

\begin{proposition}\label{prop:u-E}
The following refined estimates are true
\be 
\aligned
E(s, \del^I L^J u)^{1/2}
&\lesssim
\eps + (C_1 \eps)^2 s^\delta,
\qquad
 |I| + |J| = N,
\\
E(s, \del^I L^J u)^{1/2}
&\lesssim
\eps + (C_1 \eps)^2,
\qquad
 |I| + |J| \leq N-1.
\endaligned
\ee
\end{proposition}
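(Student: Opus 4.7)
The plan is to invoke the hyperboloidal wave energy inequality \eqref{eq:w-EE} for $\partial^I L^J u$, yielding
$$
E(s, \partial^I L^J u)^{1/2} \leq E(s_0, \partial^I L^J u)^{1/2} + \int_{s_0}^s \|\Box \partial^I L^J u\|_{L^2_f(\Hcal_{s'})}\, ds'.
$$
The data term is controlled by $\eps$ via \eqref{main-thm-initial-data}, so the whole argument reduces to producing an integrable majorant for $\|\partial^I L^J(Q_{u0}+Q_{u1})\|_{L^2_f}$: the target is $\lesssim (C_1\eps)^2 s^{-1+\delta}$ at top order $|I|+|J| = N$, and $\lesssim (C_1\eps)^2 s^{-1-\eta}$ for some $\eta>0$ at the intermediate orders $|I|+|J|\leq N-1$. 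Expanding via Leibniz writes the integrand as a sum of products $\partial^{I_1} L^{J_1} X_1 \cdot \partial^{I_2} L^{J_2} X_2$ with $X_1 \in \{u,\partial u\}$, $X_2 \in \{v,\partial v\}$.

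The good contributions from $Q_{u1}$ always carry a derivative on the $u$-factor, and the bad contributions from $Q_{u0}$ in which the $u$-factor sits at low order ($|I_1|+|J_1|\leq N-6$) are settled directly by pairing the pointwise bounds of Propositions \ref{prop:BA1}--\ref{prop:BA2} against the $L^2$ bounds: for instance, $|\partial^{I_1} L^{J_1} u|\lesssim C_1\eps/(s t^{1/2}) \leq C_1\eps/s^{3/2}$ matched with the $L^2_f$ bound of $\partial^{I_2} L^{J_2} v$ yields $(C_1\eps)^2 s^{-3/2+\delta}$, safely integrable.

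The delicate terms are the bad ones $M_u uv$ and $M_u^\alpha u\partial_\alpha v$ when (most of) the derivatives sit on $u$. Hardy's inequality \eqref{eq:Hardy} gives $\|r^{-1}\partial^I L^J u\|_{L^2_f} \lesssim E(s,\partial^I L^J u)^{1/2}$, but pairing this with the best available pointwise bound $|rv|\lesssim t|v|\lesssim 1/t^{1/2}\lesssim 1/s^{1/2}$ falls short: it produces only $\sim (C_1\eps)^2 s^{-1/2}$, whose $s$-integral grows like $s^{1/2}$ and is incompatible with the bootstrap target. This is where the hidden null structure alluded to in the introduction becomes essential. I would introduce the modified unknown
$$
\widetilde u := u + M_u\, uv + M_u^\alpha\, u\partial_\alpha v,
$$
and combine the Klein-Gordon equation $-\Box v + v = Q_{v0}+Q_{v1}$ with the product rule $\Box(fg)=f\Box g + g\Box f + 2\partial^\beta f \partial_\beta g$ to verify that
$$
-\Box \widetilde u = Q_{u1} - 2M_u\, \partial^\beta u\, \partial_\beta v - 2 M_u^\alpha\, \partial^\beta u\, \partial_\beta \partial_\alpha v + (\text{cubic in } u, v).
$$
Thus the bad quadratic nonlinearities of $u$ are traded for the classical null form $\partial^\beta u \partial_\beta v$ (and its $\partial v$-variant), plus a genuinely cubic remainder.

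Applying the energy estimate to $\partial^I L^J \widetilde u$ and expanding the null forms via Lemma \ref{lem:null-classical}, I would follow the strategy announced in the introduction of placing the scaling vector field $L_0$ solely on the wave factor; its $L^2$ norm is controlled by the conformal energy via Lemma \ref{lem:L2type}, producing an integrand of the form $(C_1\eps)^2 s^{-1+\delta}$ at top order (respectively $(C_1\eps)^2 s^{-1-\eta}$ at intermediate orders using the better conformal-energy bounds for $|I|+|J|\leq N-4$). The cubic remainders carry three smallness factors and integrate trivially. Finally, since $\widetilde u - u$ is quadratic in the pointwise small solution, the energies $E(s,\partial^I L^J u)$ and $E(s,\partial^I L^J \widetilde u)$ differ by $O((C_1\eps)^2)$, and the estimate transfers back to $u$, yielding the two inequalities in Proposition \ref{prop:u-E}.
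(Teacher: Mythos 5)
Your overall plan is sound mathematics, but it is not the route the paper takes for this particular proposition, and there is one genuine gap in it.

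\textbf{Comparison with the paper.} The paper proves Proposition \ref{prop:u-E} by a \emph{direct} estimate: it applies the Leibniz rule to $\del^I L^J (Q_{u0}+Q_{u1})$ and then, for each product, distributes $L^2$ and $L^\infty$ norms using the weight split $fg = (s/t)f \cdot (t/s)g$, with the $L^2$ bound on $(s/t)\del^{I_1}L^{J_1}u$ coming from the conformal-energy bootstrap via Lemma \ref{lem:L2type}. No transformation of the unknown and no null structure are invoked at this stage; those are introduced only later, in the proof of Lemma \ref{lem:U-est}, to close the \emph{conformal} energy at orders $\leq N-3$. Your Hardy-pairing heuristic $\|r^{-1}\del^I L^J u\|_{L^2}\cdot\|r v\|_{L^\infty}\lesssim s^{-1/2}$ correctly shows that a naive $r^{-1}$--$r$ pairing is insufficient, but it is not what the paper does: with the $(s/t)$--$(t/s)$ split one gets $\|(s/t)\del^{I_1}L^{J_1}u\|_{L^2_f}\cdot\|(t/s)v\|_{L^\infty}$, and the second factor is $\lesssim C_1\eps\, s^{-3/2}$ rather than $s^{-1/2}$. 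So the reason you give for abandoning the direct estimate does not match the paper's actual argument.

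\textbf{The gap in your transformation route.} Your modified unknown $\widetilde u = u + M_u uv + M_u^\alpha u\del_\alpha v$ and the resulting identity $-\Box\widetilde u = Q_{u1} - 2M_u\del^\beta u\del_\beta v - 2M_u^\alpha\del^\beta u\del_\beta\del_\alpha v + \text{cubic}$ are correct (the paper's version absorbs $Q_{u1}$ too and omits a factor $2$, but that is cosmetic). However, once you apply $\del^I L^J$ with $|I|+|J|=N$ and use Lemma \ref{lem:null-classical}, the null form produces $L_0\del^{I_1}L^{J_1}u$ with $|I_1|+|J_1|$ as large as $N$, so $L_0\del^{I_1}L^{J_1}u$ sits at order $N+1$. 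The only way Lemma \ref{lem:L2type} controls $\|(s/t)L_0\del^{I_1}L^{J_1}u\|_{L^2_f}$ is via $E_{con}(s,\del^{I_1}L^{J_1}u)^{1/2}$, and the conformal-energy bootstrap assumptions \eqref{eq:BA} stop at $|I_1|+|J_1|\leq N-1$. Thus the term $|I_1|+|J_1|=N$, $|I_2|+|J_2|=0$ is not covered by your argument as written. You need to treat that term separately, \emph{without} the $L_0$ rewriting: since it has a derivative $\del_\gamma\del^{I_1}L^{J_1}u$ the standard energy at order $N$ controls $\|(s/t)\del\del^{I_1}L^{J_1}u\|_{L^2_f}\lesssim C_1\eps s^\delta$, and $\|(t/s)\del v\|_{L^\infty}\lesssim C_1\eps s^{-1}$, giving the required $(C_1\eps)^2 s^{-1+\delta}$ integrand. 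With that one extra case handled directly, your transformation-based route does close both levels of Proposition \ref{prop:u-E}; it just does considerably more work than the paper, which defers the null-form machinery to the lower-order conformal-energy estimates in Lemma \ref{lem:U-est}.
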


\begin{proof}

Based on \eqref{eq:model-del} as well as the energy estimates, we have
$$
\aligned
&E(s, \del^I L^J u)^{1/2}
\\
\leq
&E(s_0, \del^I L^J u)^{1/2}
+
\int_{s_0}^s \Big\| \del^I L^J Q_{u0}(u; v, \del v) + \del^I L^J Q_{u1}(\del u; v, \del v) \Big\|_{L^2_f(\Hcal_{s'})} \, ds'.
\endaligned
$$

We first take into account the cases of $|I| + |J| \leq N-1$, and by the product rule (with two nice functions $f, g$)
$$
\del^I L^J (fg)
=
\sum_{I_1 + I_2 = I, J_1 + J_2 = J} \del^{I_1} L^{J_1} f \, \del^{I_2} L^{J_2} g,
$$
we get
$$
\aligned
&\Big\| \del^I L^J Q_{u0}(u; v, \del v) + \del^I L^J Q_{u1}(\del u; v, \del v) \Big\|_{L^2_f(\Hcal_{s'})}
\\
\lesssim
&\sum_{\substack{ |I_1| + |J_1| \leq N-1\\ |I_2| + |J_2| \leq N-4}} \Big(\big\| (s'/t) \del^{I_1} L^{J_1} u \big\|_{L^2_f(\Hcal_{s'})} + \big\| (s'/t) \del \del^{I_1} L^{J_1} u \big\|_{L^2_f(\Hcal_{s'})} \Big)
\\
&\hskip2.2cm\Big(\big\| (t/s') \del^{I_2} L^{J_2} v \big\|_{L^\infty(\Hcal_{s'})} + \big\| (t/s') \del \del^{I_2} L^{J_2} v \big\|_{L^\infty(\Hcal_{s'})} \Big)
\\
+
&\sum_{\substack{|I_1| + |J_1| \leq N-4\\ |I_2| + |J_2| \leq N-1}} \Big(\big\| \del^{I_1} L^{J_1} u \big\|_{L^\infty(\Hcal_{s'})} + \big\| \del \del^{I_1} L^{J_1} u \big\|_{L^\infty(\Hcal_{s'})} \Big)
\\
&\hskip2.3cm\Big(\big\| \del^{I_2} L^{J_2} v \big\|_{L_f^2(\Hcal_{s'})} + \big\| \del \del^{I_2} L^{J_2} v \big\|_{L_f^2(\Hcal_{s'})} \Big)
\\
\lesssim &(C_1 \eps)^2 s'^{-3/2+\delta},
\endaligned
$$
in which we used (recall $s' \leq t$)
$$
\aligned
&\sum_{|I_2| + |J_2| \leq N-4} \Big(\big\| (t/s') \del^{I_2} L^{J_2} v \big\|_{L^\infty(\Hcal_{s'})} + \big\| (t/s') \del \del^{I_2} L^{J_2} v \big\|_{L^\infty(\Hcal_{s'})} \Big) 
\\
\lesssim 
&\sum_{|I_2| + |J_2| \leq N-4} \Big( \big\| (t/s') t^{-3/2} \big\|_{L^\infty(\Hcal_{s'})} \big\| t^{3/2} \del^{I_2} L^{J_2} v \big\|_{L^\infty(\Hcal_{s'})} 
\\
&\hskip1.7cm+ \big\| (t/s') t^{-3/2} \big\|_{L^\infty(\Hcal_{s'})} \big\| t^{3/2} \del \del^{I_2} L^{J_2} v \big\|_{L^\infty(\Hcal_{s'})} \Big)
\\
\lesssim & C_1 \eps s'^{-3/2},
\endaligned
$$
which yields that
$$
\aligned
E(s, \del^I L^J u)^{1/2}
\lesssim
&E(s_0, \del^I L^J u)^{1/2}
+
(C_1 \eps)^2 \int_{s_0}^s s'^{-3/2+\delta} \, ds'
\\
\lesssim
& \eps + (C_1 \eps)^2.
\endaligned
$$

Similarly, for the cases of $|I| + |J| =N$, we find that 
$$
\aligned
&\Big\| \del^I L^J Q_{u0}(u; v, \del v) + \del^I L^J Q_{u1}(\del u; v, \del v) \Big\|_{L^2_f(\Hcal_{s'})}
\\
\lesssim
&\sum_{\substack{ |I_1| + |J_1| \leq N\\ |I_2| + |J_2| \leq N-4}} \Big(\big\| (s'/t) \del^{I_1} L^{J_1} u \big\|_{L^2_f(\Hcal_{s'})} + \big\| (s'/t) \del \del^{I_1} L^{J_1} u \big\|_{L^2_f(\Hcal_{s'})} \Big)
\\
&\hskip2cm\Big(\big\| (t/s') \del^{I_2} L^{J_2} v \big\|_{L^\infty(\Hcal_{s'})} + \big\| (t/s') \del \del^{I_2} L^{J_2} v \big\|_{L^\infty(\Hcal_{s'})} \Big)
\\
+
&\sum_{\substack{|I_1| + |J_1| \leq N-4\\ |I_2| + |J_2| \leq N}} \Big(\big\| (t/s') \del^{I_1} L^{J_1} u \big\|_{L^\infty(\Hcal_{s'})} + \big\| (t/s') \del \del^{I_1} L^{J_1} u \big\|_{L^\infty(\Hcal_{s'})} \Big)
\\
&\hskip2.8cm\Big(\big\| \del^{I_2} L^{J_2} v \big\|_{L_f^2(\Hcal_{s'})} + \big\| (s'/t) \del \del^{I_2} L^{J_2} v \big\|_{L_f^2(\Hcal_{s'})} \Big).
\endaligned
$$
We pay attention to the term $\big\| (s'/t) \del^{I_1} L^{J_1} u \big\|_{L^2_f(\Hcal_{s'})}$ with $|I_1| + |J_1| \leq N$, and we bound it by distinguishing two situations: $|I_1| = 0$ and $|I_1| \neq 0$. When $|I_1| \neq 0$, with $|I_2| \geq 0$ and $|I_2| + |J_1| \leq N-1$, we have
$$
\big\| (s'/t) \del^{I_1} L^{J_1} u \big\|_{L^2_f(\Hcal_{s'})}
\lesssim \big\| (s'/t) \del \del^{I_2} L^{J_1} u \big\|_{L^2_f(\Hcal_{s'})}
\lesssim C_1 \eps.
$$
When $|I_1| = 0$, the conformal energy estimates in Proposition \ref{prop:BA2} imply
$$
\big\| (s'/t) \del^{I_1} L^{J_1} u \big\|_{L^2_f(\Hcal_{s'})}
\lesssim \big\| (s'/t) L L^{N-1} u \big\|_{L^2_f(\Hcal_{s'})}
\lesssim C_1 \eps s'^{1/2+\delta}.
$$
Thus we obtain
$$
\Big\| \del^I L^J Q_{u0}(u; v, \del v) + \del^I L^J Q_{u1}(\del u; v, \del v) \Big\|_{L^2_f(\Hcal_{s'})}
\lesssim (C_1 \eps)^2 s'^{-1+\delta},
$$
and this gives us
$$
E(s, \del^I L^J u)^{1/2}
\lesssim
\eps + (C_1 \eps)^2 s^\delta.
$$

We thus complete the proof.
\end{proof}

Next, we turn to prove the refined estimates for the conformal energy of $\del^I L^J u$ with $|I| + |J| \leq N-1$,
but this does not seem to be possible with those seemingly bad nonlinearities $Q_{u0}(u; v, \del v) + Q_{u1}(\del u; v, \del v)$. Fortunately, we find that we can reveal a null structure by doing a transformation on the wave equation. And then the $L^2$--type estimates involving the scaling vector field $L_0$ in Lemma \ref{lem:L2type} provide us with very good bounds of the null forms.

In order to achieve our goal, we first do a transformation on the wave equation in \eqref{eq:model}.

\begin{lemma}
Let 
$$
U = u + Q_{u0}(u; v, \del v) + Q_{u1}(\del u; v, \del v),
$$ 
then $U$ solves the following wave equation
\bel{eq:U-eq} 
\aligned
- \Box U 
=
N(u, v)
+ H(u, v),
\\
\big(U, \del_t U \big) (s_0, \cdot)
= \big( U_0, U_1 \big),
\endaligned
\ee
in which $N(u, v)$ represents the quadratic null terms
\be 
N(u, v)
=
- M_u \del_\gamma u \del^\gamma v 
- M_u^\alpha \del_\gamma u \del^\gamma \del_\alpha v 
- N_u^\alpha \del_\gamma \del_\alpha u \del^\gamma v 
- N_u^{\alpha \beta} \del_\gamma \del_\alpha u \del^\gamma \del_\beta v ,
\ee
and $H(u, v)$ represents the negligible cubic terms of the form
\be 
\aligned
H(u, v)
&=
M_u (-\Box u) v + M_u u (-\Box v + v)
+
M_u^\alpha (-\Box u) \del_\alpha v + M_u^\alpha u (-\Box \del_\alpha v + \del_\alpha v)
\\
&+
N_u^\alpha (-\Box \del_\alpha u) v + N_u^\alpha \del_\alpha u (-\Box v + v)
+
N_u^{\alpha \beta} (-\Box \del_\alpha u) \del_\beta v + N_u^{\alpha \beta} \del_\alpha u (-\Box \del_\beta v + \del_\beta v),
\endaligned
\ee
and the initial data are
\be 
\aligned
&\big( U_0, U_1 \big) 
\\
=
&\big( u + Q_{u0}(u; v, \del v) + Q_{u1}(\del u; v, \del v), \del_t u + \del_t Q_{u0}(u; v, \del v) + \del_t Q_{u1}(\del u; v, \del v) \big) (s_0, \cdot).
\endaligned
\ee
\end{lemma}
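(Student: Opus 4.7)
The plan is to verify the identity by a direct computation, applying $-\Box$ to the defining relation $U = u + Q_{u0}(u; v, \del v) + Q_{u1}(\del u; v, \del v)$ and rearranging using the Leibniz rule for the d'Alembertian together with the original equations for $u$ and $v$.

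First, I would recall the pointwise identity, valid for any sufficiently smooth $f$ and $g$,
\[
\Box(fg) = (\Box f)\,g + 2\,\eta^{\gamma\delta}\del_\gamma f \,\del_\delta g + f\,\Box g,
\]
together with the commutation $[\Box,\del_\alpha]=0$. Applying these to each of the four building blocks $M_u uv$, $M_u^{\alpha} u \del_\alpha v$, $N_u^\alpha \del_\alpha u\, v$, and $N_u^{\alpha\beta} \del_\alpha u \del_\beta v$ produces, for every block, three contributions: one in which $\Box$ falls on the $u$-factor (or a derivative of $u$), a cross term of the form $\del^\gamma(\cdot)\,\del_\gamma(\cdot)$, and one in which $\Box$ falls on the $v$-factor (or a derivative of $v$).

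The second step is to substitute the equations into these contributions. Since $-\Box u$ is itself quadratic in $(u,v)$, every term with $\Box$ hitting a $u$-factor becomes cubic and is collected into $H(u,v)$. For the terms with $\Box$ hitting a $v$-factor I would use $-\Box v = -v + Q_{v0} + Q_{v1}$ (and the $\del_\alpha$-commuted version $-\Box \del_\alpha v = -\del_\alpha v + \del_\alpha(Q_{v0}+Q_{v1})$) to write, for instance, $M_u u(-\Box v) = -M_u uv + M_u u(Q_{v0}+Q_{v1})$. The crucial observation is that the purely quadratic pieces generated this way, namely $-M_u uv$, $-M_u^\alpha u \del_\alpha v$, $-N_u^\alpha \del_\alpha u\, v$ and $-N_u^{\alpha\beta} \del_\alpha u \del_\beta v$, cancel exactly against the quadratic source $Q_{u0}(u;v,\del v) + Q_{u1}(\del u;v,\del v)$ appearing in $-\Box u$. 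What remains at the quadratic level is precisely the collection of cross terms, each of which carries the classical null structure $\del^\gamma(\cdot)\,\del_\gamma(\cdot)$, and these assemble into $N(u,v)$ as stated (any overall numerical factor is harmless and can be absorbed into the constants).

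The remaining step is pure bookkeeping: collect the cubic remainders — products of $-\Box u$, $-\Box v + v$, or their $\del_\alpha$-derivatives with a $u$- or $v$-factor — into the expression for $H(u,v)$ written in the lemma, and read off $(U_0,U_1)$ by evaluating $U$ and $\del_t U$ at $s=s_0$ from the given data $(u_0,u_1,v_0,v_1)$. I do not anticipate any genuine obstacle: no cancellation beyond the one identified above is needed, and the only point requiring a little care is the sign tracking when substituting $-\Box v = -v + (\ldots)$, which is precisely what triggers the cancellation of the dangerous $uv$-type quadratic terms and exposes the hidden null structure.
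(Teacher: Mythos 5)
Your proposal is correct and follows the only sensible route, which is surely what the author has in mind: expand $-\Box U$ by the Leibniz rule for $\Box$ on each of the four blocks of $Q_{u0}+Q_{u1}$, regroup the $u\,\Box v$-type pieces into $u(-\Box v + v) - uv$ (and similarly for the other blocks), and observe that the extra quadratic pieces $-Q_{u0}-Q_{u1}$ thus produced cancel exactly against the substitution $-\Box u = Q_{u0}+Q_{u1}$. You are also right to flag the factor of $2$ on the cross terms $2\,\del_\gamma(\,\cdot\,)\del^\gamma(\,\cdot\,)$ coming from $\Box(fg)$, which is missing in the paper's expression for $N(u,v)$ but is immaterial for the estimates.
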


Since we work in $\RR^{3+1}$ and there exists at least one factor of Klein-Gordon component in the cubic nonlinearities of $H(u, v)$, we know $H(u, v)$ behaves very nicely, and we will not pay extra attention to the cubic terms in $H(u, v)$. We will next derive estimates for the new variable $U$, and then in turn deduce the wanted estimates for the original unknown $u$ by the simple observation that the difference between $u$ and $U$ contains only quadratic terms.

\begin{lemma}\label{lem:U-est}
The following estimates for $U$ hold
\be 
\aligned
E_{con}(s, \del^I L^J U)^{1/2}
&\lesssim
\eps + (C_1 \eps)^2 s^{1/2 + \delta},
\qquad
&|I| + |J| \leq N-1,
\\
E_{con}(s, \del^I L^J U)^{1/2}
&\lesssim
\eps + (C_1 \eps)^2 s^\delta,
\qquad
&|I| + |J| \leq N-2,
\\
E_{con}(s, \del^I L^J U)^{1/2}
&\lesssim
\eps + (C_1 \eps)^2,
\qquad
&|I| + |J| \leq N-3.
\endaligned
\ee

\end{lemma}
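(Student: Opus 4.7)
The plan is to apply the conformal energy estimate of Lemma~\ref{lem:conformal} to $\del^I L^J U$, which gives
\[
E_{con}(s, \del^I L^J U)^{1/2}
\leq E_{con}(s_0, \del^I L^J U)^{1/2}
+ 2\int_{s_0}^s s' \big\| \Box \del^I L^J U \big\|_{L^2_f(\Hcal_{s'})} \, ds'.
\]
The initial datum is $O(\eps)$ since $U-u$ is quadratic in the small initial quantities. Using $-\Box U = N(u,v) + H(u,v)$, the task reduces to estimating the integrand $s' \|\del^I L^J (N+H)\|_{L^2_f(\Hcal_{s'})}$. The cubic part $H(u,v)$ is routine: after re-expanding $-\Box u$ and $-\Box v + v$ via \eqref{eq:model}, every summand is genuinely cubic in $u, v, \del u, \del v$ and keeps at least one Klein-Gordon factor, which supplies $t^{-3/2}$ pointwise decay; distributing derivatives and pairing the highest-order factor in $L^2_f$ with the rest in $L^\infty$ via Propositions~\ref{prop:BA1}--\ref{prop:BA2} yields an integrand bounded by a constant times $(C_1\eps)^3 s^{-3}$ at worst, a harmless contribution.

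For the null quadratic $N(u,v)$, I would distribute $\del^I L^J$ through the bilinear forms by Lemma~\ref{lem:commu-eq}, producing terms of the shape $\del_\gamma A \, \del^\gamma B$ with $A = \del^{I_1}L^{J_1} \del^{\leq 1} u$ and $B = \del^{I_2}L^{J_2} \del^{\leq 1} v$ of total order at most $|I|+|J|$. I then apply the null-form identity of Lemma~\ref{lem:null-classical}
\[
\del_\gamma A \, \del^\gamma B = -\tfrac{1}{t}\big(\del_t B \cdot L_0 A - L_a B \cdot \del^a A\big).
\]
The essential point is that the scaling vector field $L_0$ falls on the wave factor $A$, exactly where Lemma~\ref{lem:L2type} furnishes the conformal-energy-based control $\|(s/t) L_0 A\|_{L^2_f} \lesssim E_{con}(s, A)^{1/2}$. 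Writing $1/t = s^{-1}(s/t)$ turns the null-form gain into an explicit $s^{-1}$ weight that pairs naturally with the $(s/t) L_0 A$ factor in $L^2_f$.

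The next step is a standard high/low split of the derivative budget. When $A$ is the higher-order factor, I would place $(s/t) L_0 A$ in $L^2_f$ (bounded by $C_1\eps$, $C_1\eps s^\delta$, or $C_1\eps s^{1/2+\delta}$ depending on order, via the bootstrap) and the Klein-Gordon derivatives $\del_t B$, $L_a B$ in $L^\infty$, using the pointwise decay $\lesssim C_1\eps s^{-3/2}$ on $\Hcal_s$ from Proposition~\ref{prop:BA1}. When $B$ is the higher-order factor, I swap the pairing, putting $\del A$ in $L^\infty$ ($\lesssim C_1\eps s^{-3/2}$ again from Proposition~\ref{prop:BA1}) and the Klein-Gordon quantity in the weighted $L^2_f$ accessible through the energy $E_1$. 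An exponent count then gives $s \, \|(\mathrm{product})\|_{L^2_f(\Hcal_s)} \lesssim (C_1\eps)^2 s^{-1+\delta}$ in every subcase, integrable on $[s_0, s]$ and yielding the claimed bound $\eps + (C_1\eps)^2 s^\delta$ for $|I|+|J| = N-3$; when $|I|+|J|\le N-4$ all orders drop by one, the $s^\delta$ factor disappears, and we recover $\eps + (C_1\eps)^2$.

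The main obstacle is the asymmetry of the identity: $L_0$ sits on the wave factor even when that factor is low-order, yet the bootstrap \eqref{eq:BA} never directly supplies an $L^\infty$-bound on $L_0 u$. The resolution relies on the commutation $[L_a, L_0] = 0$, so the weighted Sobolev estimate of Lemma~\ref{lem:sobolev} applies to $(s/t) L_0 L^K u$ with $|K|\leq 2$ and, together with the conformal-energy bootstrap of Proposition~\ref{prop:BA2}, produces an auxiliary pointwise bound $\|s t^{1/2} L_0 \del^I L^J u\|_{L^\infty(\Hcal_s)} \lesssim C_1\eps$ for $|I|+|J|\leq N-6$. With this in hand the subcase with a low-order wave factor also closes, completing the improved bound on $E_{con}(s, \del^I L^J U)^{1/2}$.
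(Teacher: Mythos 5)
Your proposal is essentially correct and identifies the core mechanism: transform to $U$, apply the conformal energy estimate, rewrite the quadratic null terms via Lemma~\ref{lem:null-classical} so that $L_0$ lands on the wave factor, and control $(s/t)L_0(\cdot)$ via the conformal-energy bootstrap of Lemma~\ref{lem:L2type} / Proposition~\ref{prop:BA2}. The exponent count and the treatment of the cubic remainder $H(u,v)$ are also consistent with what the paper does.

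Where you diverge from the paper is in the case analysis. You perform a genuine high/low split: when the Klein--Gordon factor carries the majority of the derivatives you move the \emph{wave} factor to $L^\infty$, which then forces you to manufacture an auxiliary pointwise bound $\| s t^{1/2} L_0 \del^I L^J u\|_{L^\infty} \lesssim C_1\eps$ for $|I|+|J|\le N-6$ via the commutation $[L_a,L_0]=0$, the Sobolev inequality of Lemma~\ref{lem:sobolev}, and the conformal-energy bootstrap. That auxiliary estimate is correct, but it is an unnecessary detour. The paper instead observes that the split is never needed: the wave factor \emph{always} goes in the weighted $L^2_f$ norm and the Klein--Gordon factor \emph{always} goes in $L^\infty$. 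This is possible because, after distributing $\del^I L^J$ with $|I|+|J|\le N-3$ and applying the null-form identity, the Klein--Gordon factor is of the form $\del^{I'}L^{J'}\del v$ (or $\del^{I'}L^{J'} v$) with $|I'|+|J'|\le |I_2|+|J_2|+1 \le N-2$, which is exactly within the range of the $L^\infty$ bootstrap bounds in Proposition~\ref{prop:BA1}, even in the extreme case $|I_2|+|J_2|=N-3$. Meanwhile the wave factor, including $L_0 \del^{I_1}L^{J_1}u$ with $|I_1|+|J_1|\le N-3$, is controlled in $L^2_f$ by Proposition~\ref{prop:BA2}. So no swap and no auxiliary $L^\infty$ bound on $L_0 u$ is required. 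Your route closes as well (and the hierarchy of regularity, $N\ge 12$, accommodates the loss of derivatives in the auxiliary Sobolev step), but the paper's uniform pairing is cleaner and avoids introducing an extra pointwise estimate not present in the bootstrap scheme. One minor imprecision: in the ``$B$ high-order'' branch you write ``putting $\del A$ in $L^\infty$,'' but the term actually carrying the obstruction involves $L_0 A$, not $\del A$; you do correct this at the end, but the phrasing could mislead.
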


\begin{proof}

We act $\del^I L^J$ with $|I| + |J| \leq N-1$ on the $U$ equation \eqref{eq:U-eq} to obtain
$$
- \Box \del^I L^J U 
=
\del^I L^J N(u, v)
+ \del^I L^J H(u, v),
$$
in which $N(u, v)$ represents the quadratic null terms
$$
N(u, v)
=
- M_u \del_\gamma u \del^\gamma v 
- M_u^\alpha \del_\gamma u \del^\gamma \del_\alpha v 
- N_u^\alpha \del_\gamma \del_\alpha u \del^\gamma v 
- N_u^{\alpha \beta} \del_\gamma \del_\alpha u \del^\gamma \del_\beta v, 
$$
and $H(u, v)$ is the negligible cubic nonlinearities.

By Lemma \ref{lem:commu-eq}, it is not hard to see that
$$
\del^I L^J N(u, v)
=
\sum_{I_1 + I_2 = I, J_1 + J_2 = J} N(\del^{I_1} L^{J_1} u, \del^{I_2} L^{J_2} v ).
$$
Then by Lemma \ref{lem:null-classical}, we have
$$
\aligned
N(\del^{I_1} L^{J_1} u, \del^{I_2} L^{J_2} v )
=
&{1\over t} \Big( M_u \big(\del_t \del^{I_2} L^{J_2} v  L_0 \del^{I_1} L^{J_1} u - L_a \del^{I_2} L^{J_2} v \del^a \del^{I_1} L^{J_1} u \big)
\\
&+ M_u^\alpha \big( \del_t \del^{I_2} L^{J_2} \del_\alpha v  L_0 \del^{I_1} L^{J_1} u - L_a \del^{I_2} L^{J_2} \del_\alpha v \del^a \del^{I_1} L^{J_1} u \big)
\\
&+ N_u^\alpha \big( \del_t \del^{I_2} L^{J_2} v  L_0 \del^{I_1} L^{J_1} \del_\alpha u - L_a \del^{I_2} L^{J_2} v \del^a \del^{I_1} L^{J_1} \del_\alpha u \big)
\\
&+N_u^{\alpha \beta} \big( \del_t \del^{I_2} L^{J_2} \del_\beta v  L_0 \del^{I_1} L^{J_1} \del_\alpha u - L_a \del^{I_2} L^{J_2} \del_\beta v \del^a \del^{I_1} L^{J_1} \del_\alpha u \big)
 \Big).
\endaligned
$$
We are now ready to estimate 
$$
\big\| N(\del^{I_1} L^{J_1} u, \del^{I_2} L^{J_2} v ) \big\|_{L^2_f(\Hcal_{s'})},
$$
and we proceed by considering the cases of $|I_2| + |J_2 | \leq N-3$
$$
\aligned
&s'\big\| N(\del^{I_1} L^{J_1} u, \del^{I_2} L^{J_2} v ) \big\|_{L^2_f(\Hcal_{s'})}
\\
\lesssim 
&\big\| \del_t \del^{I_2} L^{J_2} v \big\|_{L^\infty(\Hcal_{s'})} \big\| (s'/t) L_0 \del^{I_1} L^{J_1} u \big\|_{L^2_f(\Hcal_{s'})} + \big\| L_a \del^{I_2} L^{J_2} v \big\|_{L^\infty(\Hcal_{s'})} \big\| (s'/t) \del^a \del^{I_1} L^{J_1} u \big\|_{L^2_f(\Hcal_{s'})}
\\
+ & \big\| \del_t \del^{I_2} L^{J_2} \del v \big\|_{L^\infty(\Hcal_{s'})} \big\| (s'/t) L_0 \del^{I_1} L^{J_1} u \big\|_{L^2_f(\Hcal_{s'})} + \big\| L_a \del^{I_2} L^{J_2} \del v \big\|_{L^\infty(\Hcal_{s'})} \big\| (s'/t) \del^a \del^{I_1} L^{J_1} u \big\|_{L^2_f(\Hcal_{s'})}
\\
+ & \big\| \del_t \del^{I_2} L^{J_2} v \big\|_{L^\infty(\Hcal_{s'})} \big\| (s'/t) L_0 \del^{I_1} L^{J_1} \del u \big\|_{L^2_f(\Hcal_{s'})} + \big\| L_a \del^{I_2} L^{J_2} v \big\|_{L^\infty(\Hcal_{s'})} \big\| (s'/t) \del^a \del^{I_1} L^{J_1} \del u \big\|_{L^2_f(\Hcal_{s'})}
\\
+ & \big\| \del_t \del^{I_2} L^{J_2} \del v\big\|_{L^\infty(\Hcal_{s'})} \big\| (s'/t)  L_0 \del^{I_1} L^{J_1} \del u \big\|_{L^2_f(\Hcal_{s'})} + \big\| L_a \del^{I_2} L^{J_2} \del v \big\|_{L^\infty(\Hcal_{s'})} \big\| (s'/t) \del^a \del^{I_1} L^{J_1} \del u \big\|_{L^2_f(\Hcal_{s'})}.
\endaligned
$$
Similarly, we have for the cases of $|I_1| + |J_1 | \leq N-5$ that
$$
\aligned
&s'\big\| N(\del^{I_1} L^{J_1} u, \del^{I_2} L^{J_2} v ) \big\|_{L^2_f(\Hcal_{s'})}
\\
\lesssim 
&\big\| \del_t \del^{I_2} L^{J_2} v \big\|_{L^2_f(\Hcal_{s'})} \big\| (s'/t) L_0 \del^{I_1} L^{J_1} u \big\|_{L^\infty(\Hcal_{s'})} + \big\| L_a \del^{I_2} L^{J_2} v \big\|_{L^2_f(\Hcal_{s'})} \big\| (s'/t) \del^a \del^{I_1} L^{J_1} u \big\|_{L^\infty(\Hcal_{s'})}
\\
+ & \big\| (s'/t) \del_t \del^{I_2} L^{J_2} \del v \big\|_{L^2_f(\Hcal_{s'})} \big\| L_0 \del^{I_1} L^{J_1} u \big\|_{L^\infty(\Hcal_{s'})} + \big\| (s'/t) L_a \del^{I_2} L^{J_2} \del v \big\|_{L^2_f(\Hcal_{s'})} \big\| \del^a \del^{I_1} L^{J_1} u \big\|_{L^\infty(\Hcal_{s'})}
\\
+ & \big\| \del_t \del^{I_2} L^{J_2} v \big\|_{L^2_f(\Hcal_{s'})} \big\| (s'/t) L_0 \del^{I_1} L^{J_1} \del u \big\|_{L^\infty(\Hcal_{s'})} + \big\| L_a \del^{I_2} L^{J_2} v \big\|_{L^2_f(\Hcal_{s'})} \big\| (s'/t) \del^a \del^{I_1} L^{J_1} \del u \big\|_{L^\infty(\Hcal_{s'})}
\\
+ & \big\| (s'/t) \del_t \del^{I_2} L^{J_2} \del v\big\|_{L^2_f(\Hcal_{s'})} \big\| L_0 \del^{I_1} L^{J_1} \del u \big\|_{L^\infty(\Hcal_{s'})} + \big\| (s'/t) L_a \del^{I_2} L^{J_2} \del v \big\|_{L^2_f(\Hcal_{s'})} \big\| \del^a \del^{I_1} L^{J_1} \del u \big\|_{L^\infty(\Hcal_{s'})}.
\endaligned
$$

By inserting the estimates in Proposition \ref{prop:BA1} and Proposition \ref{prop:BA2}, with some cumbersome but straightforward calculation, we obtain
\begin{eqnarray}
& s' \big\| N(\del^{I_1} L^{J_1} u, \del^{I_2} L^{J_2} v ) \big\|_{L^2_f(\Hcal_{s'})}  \hskip4.5cm  \notag
\\
&\lesssim 
\left\{
\begin{array}{lll}
&(C_1 \eps)^2 s'^{-1/2+\delta}, \qquad |I| + |J| \leq N-1, I_1+I_2 = I, J_1+J_2 = J,
\\
&(C_1 \eps)^2 s'^{-1+\delta}, \qquad |I| + |J| \leq N-2, I_1+I_2 = I, J_1+J_2 = J,
\\
&(C_1 \eps)^2 s'^{-3/2+\delta}, \qquad |I| + |J| \leq N-3, I_1+I_2 = I, J_1+J_2 = J,
\end{array}
\right.
\end{eqnarray}
Easily we can also get such estimates for the negligible cubic term $\del^I L^J H(u, v)$ which read
$$
s' \big\| \del^I L^J H(u, v) \big\|_{L^2_f(\Hcal_{s'})}
\lesssim (C_1 \eps)^3 s'^{-3/2+\delta}.
$$

Finally we recall the conformal energy estimate
$$
\aligned
& E_{con}(s, \del^I L^J U)^{1/2} 
\\
\leq
& E_{con}(s_0, \del^I L^J U)^{1/2} 
+
\int_{s_0}^s s' \Big\| \del^I L^J N(u, v) + \del^I L^J H(u, v) \Big\|_{L^2_f(\Hcal_{s'})} \, ds',
\endaligned
$$
which implies 
\begin{eqnarray}
 E_{con}(s, \del^I L^J U)^{1/2} 
\lesssim 
&\left\{
\begin{array}{lll}
&\eps + (C_1 \eps)^2 s^{1/2 + \delta}, &\qquad |I| + |J| \leq N-1,  \notag
\\
&\eps + (C_1 \eps)^2 s^\delta, &\qquad |I| + |J| \leq N-2,
\\
&\eps + (C_1 \eps)^2, &\qquad |I| + |J| \leq N-3, \notag
\end{array}
\right.
\end{eqnarray}
and this completes the proof.
\end{proof}

\begin{proposition}\label{prop:u-con2}
The following improved estimates are valid
\be 
\aligned
E_{con}(s, \del^I L^J u)^{1/2}
&\lesssim
\eps + (C_1 \eps)^2 s^{1/2+\delta},
\qquad
&|I| + |J| \leq N-1,
\\
E_{con}(s, \del^I L^J u)^{1/2}
&\lesssim
\eps + (C_1 \eps)^2 s^\delta,
\qquad
&|I| + |J| \leq N-2,
\\
E_{con}(s, \del^I L^J u)^{1/2}
&\lesssim
\eps + (C_1 \eps)^2,
\qquad
&|I| + |J| \leq N-3.
\endaligned
\ee

\end{proposition}

\begin{proof}

Recall the relation between $u$ and $U$ reads
$$
U = u + Q_{u0}(u; v, \del v) + Q_{u1}(\del u; v, \del v), 
$$
hence we are led to
$$
\aligned
&E_{con}(s, \del^I L^J u)^{1/2}
\\
\lesssim
&E_{con}(s, \del^I L^J U)^{1/2}
+
E_{con}(s, \del^I L^J Q_{u0}(u; v, \del v) )^{1/2}
+
E_{con}(s, \del^I L^J Q_{u1}(u; v, \del v) )^{1/2}.
\endaligned
$$
Since $Q_{u0}(u; v, \del v), Q_{u1}(u; v, \del v)$ are quadratic terms, we easily obtain
\begin{eqnarray}
& \hskip3.6cm E_{con}(s, \del^I L^J Q_{u0}(u; v, \del v) )^{1/2}
+
E_{con}(s, \del^I L^J Q_{u1}(u; v, \del v) )^{1/2}  \hskip1cm  \notag
\\
&\lesssim  
\left\{
\begin{array}{lll}
&(C_1 \eps)^2 s^{2\delta}, \hskip2cm & |I| + |J| \leq N-1,
\\
& (C_1 \eps)^2,  \hskip2cm & |I| + |J| \leq N-2,
\end{array}
\right.
\end{eqnarray}
which finishes the proof.

\end{proof}


\subsection{Improved estimates for the Klein-Gordon component $v$}

We now turn to improve the estimates for the Klein-Gordon component $v$, which are more straightforward.

\begin{proposition}\label{prop:v-E}
It holds
\bel{eq:KG-improved}
\aligned
E_1(s, \del^I L^J v)^{1/2}
&\lesssim
\eps + (C_1 \eps)^2 s^\delta,
\qquad
 |I| + |J| = N,
\\
E_1(s, \del^I L^J v)^{1/2}
&\lesssim
\eps + (C_1 \eps)^2,
\qquad
 |I| + |J| \leq N-1.
\endaligned
\ee
\end{proposition}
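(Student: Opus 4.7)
The plan is to apply $\del^I L^J$ to the Klein--Gordon equation in \eqref{eq:model} and invoke the energy estimate \eqref{eq:w-EE} with $m = 1$, reducing the proof to controlling
\begin{equation*}
\int_{s_0}^s \bigl\| \del^I L^J\bigl[Q_{v0}(u;v,\del v)+Q_{v1}(\del u;v,\del v)\bigr]\bigr\|_{L^2_f(\Hcal_{s'})}\,ds',
\end{equation*}
while the initial contribution $E_1(s_0,\del^I L^J v)^{1/2}$ is bounded by $\eps$ via \eqref{main-thm-initial-data}. The product rule expands the integrand as a sum of bilinear pieces $\del^{I_1}L^{J_1}(u\text{ or }\del u)\cdot\del^{I_2}L^{J_2}(v\text{ or }\del v)$ with $|I_1|+|I_2|=|I|$ and $|J_1|+|J_2|=|J|$. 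I would estimate each piece by pairing one factor in $L^\infty$ with the other in $L^2_f$, splitting into the regimes $|I_1|+|J_1|\leq N/2$ and $|I_2|+|J_2|\leq N/2$, both permitted since $N\geq 12$ gives $N/2\leq N-6$.

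In the regime $|I_1|+|J_1|\leq N/2$, Propositions~\ref{prop:BA1}--\ref{prop:BA2} give $|\del^{I_1}L^{J_1}u(t,x)|+|\del^{I_1}L^{J_1}\del u(t,x)|\lesssim C_1\eps\,s'^{-3/2}$ on $\Hcal_{s'}$ (using $t\geq s'$), and pairing with the $L^2_f$-bound on the $v$-factor from \eqref{eq:BA} produces an integrand $\lesssim (C_1\eps)^2 s'^{-3/2+\delta}$, which is safely integrable. In the opposite regime $|I_2|+|J_2|\leq N/2$, Proposition~\ref{prop:BA1} controls the $v$-factor pointwise with the decays $|v|\lesssim C_1\eps\,t^{-3/2}$ and $|\del v|\lesssim C_1\eps\,(s't^{1/2})^{-1}$, and one inserts the conjugate weight $(t/s')$ on $v$ and $(s'/t)$ on the wave factor. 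For the benign $Q_{v1}$ pieces $\del u\cdot v$ and $\del u\cdot\del v$, Proposition~\ref{prop:BA1} supplies $\|(s'/t)\del^{I_1}L^{J_1}\del u\|_{L^2_f}\lesssim C_1\eps\,s'^\delta$ even at top order, so the resulting integrand is at worst $s'^{-1+\delta}$, which integrates to the permitted $s^\delta$ at top order and, after finer splitting, to a bounded constant at $|I|+|J|\leq N-1$.

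The main obstacle is the ``bad'' $Q_{v0}$ terms $M_v uv$ and $M_v^\alpha u\del_\alpha v$ in the regime where many derivatives land on the wave factor, since $u$ then appears with no derivative and the ordinary $E$-energy of $u$ does not furnish a direct $L^2$-bound on $u$. The resolution is to invoke the conformal-energy-derived estimate of Lemma~\ref{lem:L2type}: by Proposition~\ref{prop:BA2} we have $\|(s'/t)\del^{I_1}L^{J_1}u\|_{L^2_f}\lesssim C_1\eps\,s'^{1/2+\delta}$ for $|I_1|+|J_1|\leq N-1$, and at the exact top order $|I_1|+|J_1|=N$ one extracts either a single boost $L^{J_1}=L_a L^{J_1'}$ (when $|J_1|\geq 1$) or a single spacetime derivative (when $|I_1|\geq 1$), modulo standard commutator bounds between $\del$ and $L_a$, landing back in Proposition~\ref{prop:BA2} or \ref{prop:BA1} at order $N-1$. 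Paired with the pointwise bounds $\|(t/s')v\|_{L^\infty(\Hcal_{s'})}\lesssim C_1\eps\,s'^{-3/2}$ (from $|v|\lesssim t^{-3/2}$ and $t\geq s'$) and the corresponding estimate for $(t/s')\del v$, this yields an integrand at worst $\lesssim (C_1\eps)^2 s'^{-1+\delta}$, whose time integral grows like $(C_1\eps)^2 s^\delta$, exactly matching the top-order target; splitting more finely by $|I_1|+|J_1|$ and exploiting the $\delta$-free Proposition~\ref{prop:BA2} bound at order $\leq N-4$ (together with the $s^\delta$ bound at $N-3$) gives the uniform $(C_1\eps)^2$ bound at $|I|+|J|\leq N-1$. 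Adding the initial-data contribution $\eps$ produces \eqref{eq:KG-improved} and closes the bootstrap for the Klein--Gordon component.
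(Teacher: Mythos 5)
Your proposal reproduces the same chain of reasoning as the paper's proof of Proposition~\ref{prop:v-E}: apply $\del^I L^J$ to the Klein--Gordon equation, invoke the energy estimate \eqref{eq:w-EE} with $m=1$, expand the nonlinearity by the product rule, and pair one factor in $L^\infty$ against the other in $L^2_f$ using the $(s/t)$--$(t/s)$ conjugate weights supplied by Propositions~\ref{prop:BA1} and~\ref{prop:BA2}. Your idea of ``extracting'' a boost or a spacetime derivative from the wave factor at top order, so as to land inside the scope of Proposition~\ref{prop:BA2} or the ordinary energy, is a more explicit version of what the paper does implicitly by writing $\|(s'/t)\del\del^{I_1}L^{J_1}u\|_{L^2_f}$ alongside $\|(s'/t)\del^{I_1}L^{J_1}u\|_{L^2_f}$ in the bilinear split.

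That said, the last paragraph of your argument --- the ``finer splitting'' claimed to give the uniform bound at $|I|+|J|\leq N-1$ --- is incomplete in exactly the same spot where the paper is vague. Consider the $Q_{v0}$ piece $\del^{I_1}L^{J_1}u\cdot\del^{I_2}L^{J_2}v$ with $|I_1|+|J_1|=N-1$, $|I_2|+|J_2|=0$, and $I_1=0$ (all boosts on $u$, no $\del$ to peel off and feed to the ordinary energy). The only bound available is $\|(s'/t)L^{J_1}u\|_{L^2_f}\lesssim C_1\eps\,s'^{1/2+\delta}$ from Proposition~\ref{prop:BA2}; extracting a single boost merely lands at order $N-2$, where the conformal bound is still $s'^{1/2+\delta}$, and Lemma~\ref{lem:L2type} does not let you peel off two boosts. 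Pairing with $\|(t/s')v\|_{L^\infty}\lesssim C_1\eps\,s'^{-3/2}$ then only produces an integrand $\lesssim (C_1\eps)^2 s'^{-1+\delta}$, which integrates to $s^\delta$ rather than a constant. This contribution is not covered by the $\leq N-4$ ($\delta$-free) or $=N-3$ ($s^\delta$) conformal bounds you invoke, so your write-up (like the paper's ``$\lesssim (C_1\eps)^2 s'^{-3/2+\delta}$'' at $|I|+|J|\leq N-1$) does not fully justify the uniform estimate $\leq\eps+(C_1\eps)^2$. You should spell out how the all-boost, order-$(N-1)$ wave factor in $Q_{v0}$ is handled, or note explicitly that the bootstrap hierarchy needs an additional intermediate conformal bound between orders $N-3$ and $N-1$.
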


\begin{proof}

We act the operator $\del^I L^J$ to the Klein-Gordon equation in \eqref{eq:model} to have
$$
-\Box \del^I L^J v + \del^I L^J v 
= \del^I L^J Q_{v0}(u; v, \del v) + \del^I L^J Q_{v1}(\del u; v, \del v).
$$
By the energy estimates we have
$$
\aligned
&E_1(s, \del^I L^J v)^{1/2}
\\
\leq
&E_1(s_0, \del^I L^J v)^{1/2}
+
\int_{s_0}^s \Big\| \del^I L^J Q_{v0}(u; v, \del v) + \del^I L^J Q_{v1}(\del u; v, \del v) \Big\|_{L^2_f(\Hcal_{s'})} \, ds'.
\endaligned
$$

We then start with the cases of $|I| + |J| \leq N-1$, and it is easy to find that
$$
\aligned
&\Big\| \del^I L^J Q_{v0}(u; v, \del v) + \del^I L^J Q_{v1}(\del u; v, \del v) \Big\|_{L^2_f(\Hcal_{s'})}
\\
\lesssim
&\sum_{\substack{ |I_1| + |J_1| \leq N-1\\ |I_2| + |J_2| \leq N-4}} \Big(\big\| (s'/t) \del^{I_1} L^{J_1} u \big\|_{L^2_f(\Hcal_{s'})} + \big\| (s'/t) \del \del^{I_1} L^{J_1} u \big\|_{L^2_f(\Hcal_{s'})} \Big)
\\
&\hskip2cm\Big(\big\| (t/s') \del^{I_2} L^{J_2} v \big\|_{L^\infty(\Hcal_{s'})} + \big\| (t/s') \del \del^{I_2} L^{J_2} v \big\|_{L^\infty(\Hcal_{s'})} \Big)
\\
+
&\sum_{\substack{|I_1| + |J_1| \leq N-4\\ |I_2| + |J_2| \leq N-1}} \Big(\big\| \del^{I_1} L^{J_1} u \big\|_{L^\infty(\Hcal_{s'})} + \big\| \del \del^{I_1} L^{J_1} u \big\|_{L^\infty(\Hcal_{s'})} \Big)
\\
&\hskip2.2cm\Big(\big\| \del^{I_2} L^{J_2} v \big\|_{L_f^2(\Hcal_{s'})} + \big\| \del \del^{I_2} L^{J_2} v \big\|_{L_f^2(\Hcal_{s'})} \Big),
\endaligned
$$
and then by recalling $s' \leq t$ and noting
$$
\aligned
&\sum_{|I_2| + |J_2| \leq N-4} \Big(\big\| (t/s') \del^{I_2} L^{J_2} v \big\|_{L^\infty(\Hcal_{s'})} + \big\| (t/s') \del \del^{I_2} L^{J_2} v \big\|_{L^\infty(\Hcal_{s'})} \Big) 
\\
\lesssim 
&\sum_{|I_2| + |J_2| \leq N-4} \Big( \big\| (t/s') t^{-3/2} \big\|_{L^\infty(\Hcal_{s'})} \big\| t^{3/2} \del^{I_2} L^{J_2} v \big\|_{L^\infty(\Hcal_{s'})} 
\\
&\hskip1.7cm+ \big\| (t/s') t^{-3/2} \big\|_{L^\infty(\Hcal_{s'})} \big\| t^{3/2} \del \del^{I_2} L^{J_2} v \big\|_{L^\infty(\Hcal_{s'})} \Big)
\\
\lesssim & C_1 \eps s'^{-3/2},
\endaligned
$$
we further have
$$
\aligned
\Big\| \del^I L^J Q_{v0}(u; v, \del v) + \del^I L^J Q_{v1}(\del u; v, \del v) \Big\|_{L^2_f(\Hcal_{s'})}
\lesssim (C_1 \eps)^2 s'^{-3/2+\delta},
\endaligned
$$
which is integrable, and this leads us to
$$
E_1(s, \del^I L^J v)^{1/2}
\lesssim
\eps + (C_1 \eps)^2,
\qquad
|I| + |J| \leq N-1.
$$

Next, we consider the cases of $|I| + |J| =N$, and we find similarly that 
$$
\aligned
&\Big\| \del^I L^J Q_{v0}(u; v, \del v) + \del^I L^J Q_{v1}(\del u; v, \del v) \Big\|_{L^2_f(\Hcal_{s'})}
\\
\lesssim
&\sum_{\substack{ |I_1| + |J_1| \leq N\\ |I_2| + |J_2| \leq N-4}} \Big(\big\| (s'/t) \del^{I_1} L^{J_1} u \big\|_{L^2_f(\Hcal_{s'})} + \big\| (s'/t) \del \del^{I_1} L^{J_1} u \big\|_{L^2_f(\Hcal_{s'})} \Big)
\\
&\hskip2cm\Big(\big\| (t/s') \del^{I_2} L^{J_2} v \big\|_{L^\infty(\Hcal_{s'})} + \big\| (t/s') \del \del^{I_2} L^{J_2} v \big\|_{L^\infty(\Hcal_{s'})} \Big)
\\
+
&\sum_{\substack{|I_1| + |J_1| \leq N-4\\ |I_2| + |J_2| \leq N}} \Big(\big\| (t/s') \del^{I_1} L^{J_1} u \big\|_{L^\infty(\Hcal_{s'})} + \big\| (t/s') \del \del^{I_1} L^{J_1} u \big\|_{L^\infty(\Hcal_{s'})} \Big)
\\
&\hskip2.1cm\Big(\big\| \del^{I_2} L^{J_2} v \big\|_{L_f^2(\Hcal_{s'})} + \big\| (s'/t) \del \del^{I_2} L^{J_2} v \big\|_{L_f^2(\Hcal_{s'})} \Big)
\\
\lesssim &(C_1 \eps)^2 s'^{-1+\delta},
\endaligned
$$
and this yields
$$
E_1(s, \del^I L^J v)^{1/2}
\lesssim
\eps + (C_1 \eps)^2 s^\delta,
\qquad
|I| + |J| \leq N.
$$
Hence the proof is done.

\end{proof}


\subsection{Proof of Theorem \ref{thm:wKG}}

By gathering the refined estimates we derived above, we are ready to give the proof of Theorem \ref{thm:wKG}.

\begin{proof}[Proof of Theorem \ref{thm:wKG}]
Recall the estimates obtained in propositions \ref{prop:u-E}, \ref{prop:u-con2}, and \ref{prop:v-E}, and we first choose $C_1$ very large so that $\lesssim \eps$ gives $\leq {1\over 4} C_1 \eps$, and then choose $\eps$ sufficiently small so that $\lesssim (C_1 \eps)^2$ gives $\leq {1\over 4} C_1 \eps$. By this choice, we easily have all of the wanted refined estimates in \eqref{eq:BA-refined}. Thus if $s_0 < T < + \infty$ is some finite number, then we can extend it a little bit to the future, but this contradicts its definition. This means $T$ must be $+\infty$ which gives us the global existence result. The sharp pointwise decay results in \eqref{eq:thm} immediately follows the estimates in Section \ref{subsec:BA}.

\end{proof}


\section{Acknowledgments} This project has received funding from the European Research Council (ERC) under the European Union's Horizon 2020 research and innovation program Grant agreement No 637653, project BLOC "Mathematical Study of Boundary Layers in Oceanic Motion". The author would also like to acknowledge the support from the Innovative Training Networks (ITN) grant 642768, which is entitled ModCompShock.



{\footnotesize
 
}


\end{document}